\numberwithin{equation}{section}
\def\Re{{\sf Re}\,}
\def\Im{{\sf Im}\,}
\newcommand{\R}{\mathbb R}
\newcommand{\C}{\mathbb C}
\newcommand{\Co}{\mathcal C}
\newcommand{\N}{\mathbb N}
\def\Re{{\sf Re}\,}
\def\Im{{\sf Im}\,}
\def\id{{\sf id}}
\def\Re{{\sf Re}\,}
\def\Im{{\sf Im}\,}
\newcommand{\eit}{e^{i\theta}}
\def\Re{{\sf Re}\,}
\def\Im{{\sf Im}\,}
\def\1#1{\overline{#1}}
\def\2#1{\widetilde{#1}}
\def\3#1{\widehat{#1}}
\def\4#1{\mathbb{#1}}
\def\5#1{\frak{#1}}
\def\6#1{{\mathcal{#1}}}
\def\Re{{\sf Re}\,}
\def\Im{{\sf Im}\,}
\newcommand{\mcite}[1]{\csname b@#1\endcsname}
\theoremstyle{theorem}
\def\id{{\sf id}}
\def\Re{{\sf Re}\,}
\def\Im{{\sf Im}\,}
\newtheorem{theorem}{Theorem}[section]
\newtheorem{lemma}[theorem]{Lemma}
\newtheorem{proposition}[theorem]{Proposition}
\newtheorem{corollary}[theorem]{Corollary}
\theoremstyle{definition}
\newtheorem{definition}[theorem]{Definition}
\newtheorem{example}[theorem]{Example}
\theoremstyle{remark}
\newtheorem{remark}[theorem]{Remark}
\numberwithin{equation}{section}
\begin{document}

\title[Local visibility and local Gromov hyperbolicity]{Local and global visibility and Gromov hyperbolicity of  domains with respect to the Kobayashi distance}
\author[F. Bracci]{Filippo Bracci$^{\dag}$}
\author[H. Gaussier]{Herv\'e Gaussier$^{\dag\dag}$}
\author[N. Nikolov]{Nikolai Nikolov$^{\dag\dag\dag}$}
\author[P. J. Thomas]{Pascal J. Thomas}

\address{F. Bracci: Dipartimento Di Matematica\\
Universit\`{a} di Roma \textquotedblleft Tor Vergata\textquotedblright\ \\
Via Della Ricerca Scientifica 1, 00133 \\
Roma, Italy}
\email{fbracci@mat.uniroma2.it}

\address{H. Gaussier: Univ. Grenoble Alpes, IF, F-38000 Grenoble, France and CNRS, IF, F-38000 Grenoble, France}\email{herve.gaussier@univ-grenoble-alpes.fr}

\address{N. Nikolov:
Institute of Mathematics and Informatics\\
Bulgarian Academy of Sciences\\
Acad. G. Bonchev Str., Block 8\\
1113 Sofia, Bulgaria\\ \smallskip\newline
Faculty of Information Sciences\\
State University of Library Studies
and Information Technologies\\
69A, Shipchenski prohod Str.\\
1574 Sofia, Bulgaria}
\email{nik@math.bas.bg}

\address{P.J.~Thomas:
Institut de Math\'ematiques de Toulouse; UMR5219 \\
Universit\'e de Toulouse; CNRS \\
UPS, F-31062 Toulouse Cedex 9, France} \email{pascal.thomas@math.univ-toulouse.fr}

\thanks{$^\dag$ Partially supported by PRIN 2017 Real and Complex Manifolds: Topology, Geometry and holomorphic
dynamics, Ref: 2017JZ2SW5, by GNSAGA of INdAM and by the MUR Excellence Department Project MatMod@TOV CUP:E83C23000330006 awarded
to the Department of Mathematics, University of Rome Tor Vergata}
\thanks{$^{\dag\dag}$ Partially supported by ERC ALKAGE}
\thanks{$^{\dag\dag\dag}$ Partially supported by the National Science Fund,
Bulgaria under grant number KP-06-N52/3.}

\date{\today}
\keywords{Gromov hyperbolic spaces; Kobayashi hyperbolic spaces; visibility; extension of holomorphic maps; localization}
\subjclass[2010]{32F45}

\begin{abstract}
We introduce the notion of locally visible and locally Gromov hyperbolic domains in $\C^d$. We prove that a bounded domain in $\C^d$ is locally visible and locally Gromov hyperbolic if and only if it is (globally) visible and Gromov hyperbolic with respect to the Kobayashi distance. This allows to detect, from local information near the boundary, those domains which are Gromov hyperbolic and for which biholomorphisms extend continuously up to the boundary.
\end{abstract}

\maketitle
\tableofcontents
\section{Introduction}
 In dimension one (see, {\sl e.g.}, \cite[Ch. 4]{BCD}), Carath\'eodory's prime end theory gives a precise characterization of continuous extension of Riemann mappings between simply connected domains. Carath\'eodory theory is based on the construction of a compactification with an abstract boundary (whose points are the so-called prime ends) for which every Riemann map extends naturally as a homeomorphism up to the abstract boundary. Then the problem of continuous extension is reduced to the problem of understanding for which simply connected domain the identity map extends as a homeomorphism from the abstract Carath\'eodory boundary to the Euclidean boundary.

Carath\'eodory's theory extends to higher dimension for quasi-conformal maps, but, in general, biholomorphisms are not quasi-conformal. Therefore, in order to study continuous extension of biholomorphisms, one needs a different compactification of domains for which biholomorphisms extend naturally up to the abstract boundary. Several abstract compactifications have been defined by different authors (see, {\sl e.g.} \cite{BH}) for general metric spaces. In several complex variables, since biholomorphisms are isometries for the Kobayashi metric, it is  natural to consider abstract compactifications with respect to such a metric and study their properties. 
Every proper Gromov hyperbolic space (see the definition here under) has an abstract boundary, the Gromov boundary, and a topology, the Gromov topology, which makes  the space together with
that boundary into a compact space. Since isometries naturally extend to the Gromov boundary, 
the fundamental questions now are  characterizing Gromov hyperbolic domains in $\C^d$ and studying the relations between their Euclidean boundary and their Gromov boundary.

In  \cite{BB}, it has been proved that $C^2$-smooth bounded strongly pseudoconvex domains are Gromov hyperbolic and that the identity map naturally extends as a homeomorphism from the Gromov boundary to the Euclidean boundary (thus obtaining homemorphic extension up to the closure of biholomorphisms between strongly pseudoconvex domains). The techniques in \cite{BB} has been further developed in \cite{CL}, obtaining a different proof of Fefferman's extension theorem \cite{Fe}.

In \cite{BG1, BG2, BGZ} this point of view has been used to prove extension of biholomorphisms between Gromov hyperbolic convex domains, proving, for instance, that every convex map from the ball whose image is convex extends as a homeomorphism up to the boundary regardless of the regularity of the image. In \cite{Z1, Z2} it has been proved that a smooth bounded convex domain is Gromov hyperbolic if and only if it is of D'Angelo finite type, while in \cite{Fia}  it is  proved that bounded smooth pseudoconvex domains in $\C^2$ of D'Angelo finite type are Gromov hyperbolic. In \cite{BGZ1}, Gromov hyperbolicity of convex domains is shown to be equivalent to the existence of a complete K\"ahler metric with holomorphic bisectional curvature negatively  pinched close to the boundary, suggesting the idea that Gromov hyperbolicity should be read only from local properties near the boundary.

One of the features of Gromov's compactification  is {\sl visibility}
 with respect to the Gromov boundary. Roughly speaking, visibility with respect to a boundary means that geodesic lines which converge to different points in 
 that boundary bend inside the space. However, visibility with respect to the Euclidan boundary has been exhibited for domains which are not Gromov hyperbolic in \cite{BZ, BM, BNT, Ma}, and turns out to be a key notion for continuous extension of biholomorphisms and Denjoy-Wolff type theorems. In \cite{CMS}, this notion has been extended to embedded submanifolds of $\C^d$.   Note also that a domain may be
Gromov hyperbolic and embedded into $\C^d$ in such a way that it does not enjoy visibility with respect to its Euclidan boundary; see the Remark at the end of Section \ref{sec:c-convex}.

The aim of this paper is to contribute to the previous line of ideas by showing that Gromov hyperbolicity and visibility  with respect to the Euclidean boundary of a bounded domain can be detected just by local properties of the boundary. To be more concrete, we need some definitions.

For a domain $\Omega \subset \mathbb C^d$ , we denote by $k_{\Omega}$ the infinitesimal Kobayashi pseudometric of $\Omega$ and by $K_{\Omega}$ the Kobayashi pseudodistance of $\Omega$.

Let $\Omega \subset \subset \mathbb C^d$ be a complete hyperbolic domain, meaning that $(\Omega,K_{\Omega})$ is a complete metric space. It follows from the Hopf-Rinow theorem that $(\Omega,K_{\Omega})$ is geodesic and thus, every couple of points in $\Omega$ can be joined by a geodesic for $K_\Omega$. If $p, q\in \Omega$, we denote by $[p,q]_\Omega$ any geodesic joining $p$ and $q$.

The metric space $(\Omega, K_\Omega)$ is {\sl Gromov hyperbolic} if every geodesic triangle is $\delta$-thin for some $\delta>0$.

 Any proper geodesic metric space $(X,d)$ which is Gromov hyperbolic can be embedded in a compact space 
$\overline{X}^G:=X\cup \partial_G X$ with a topology that we call the {\sl Gromov topology} whose restriction to $X$ coincides with the natural topology of $X$ (see, {\sl e.g.}, \cite{BH}).

We now turn to the precise definition of visibility.

Let $\Omega\subset \C^d$ be a bounded domain. Let $p, q\in \partial \Omega$,  $p\neq q$. 
We say that the couple $(p,q)$ satisfies the {\sl visibility condition with respect to $K_\Omega$} 
if  there exist a neighborhood 
$V_p$ of $p$ and a neighborhood $V_q$ of $q$ and a compact subset $K$ of $\Omega$ such that $V_p \cap V_q = \emptyset$ and $[x,y]_{\Omega} \cap K \neq \emptyset$ for every $x \in V_p\cap \Omega$, $y \in V_q\cap \Omega$.

We say that $\Omega$ is {\sl visible}  if  every couple of points $p,q \in \partial \Omega$, $p \neq q$, 
satisfies the visibility condition with respect to $K_\Omega$. 

\begin{definition} Let $\Omega\subset\C^d$ be a bounded domain and $p\in \partial \Omega$. We say that
\begin{itemize}
\item $\Omega$ is {\sl locally Gromov hyperbolic at $p$} if  there exists  an open neighborhood $U_p$ of $p$ such that $\Omega \cap U_p$ is connected, $(\Omega \cap U_p,K_{\Omega \cap U_p})$ is  complete hyperbolic and Gromov hyperbolic,
\item $\Omega$ is {\sl locally visible at $p$} if there exists  an open neighborhood $V_p$ of $p$ such that $\Omega \cap V_p$ is connected, $(\Omega \cap V_p,K_{\Omega \cap V_p})$ is  complete hyperbolic and there is an open neighborhood $V_p'\subseteq V_p$ such that every couple of points $q_1,q_2 \in \partial  \Omega\cap V'_p$ satisfies the visibility condition with respect to  $K_{\Omega\cap V_p}$.
\end{itemize}
If $\Omega$ is locally Gromov hyperbolic ({\sl respectively} locally visible) at every $p\in\partial \Omega$ we say that $\Omega$ is {\sl locally Gromov hyperbolic} ({\sl resp.}, {\sl locally visible}).
\end{definition}

The main result of this paper is the following:
\begin{theorem}\label{thm-locglob}
Let $\Omega \subset \subset \mathbb C^d$. Then  the following are equivalent:
\begin{enumerate}
\item $\Omega$ is visible and $(\Omega,K_{\Omega})$ is Gromov hyperbolic,
\item $\Omega$ is locally visible and locally Gromov hyperbolic.
\end{enumerate}
Moreover, if (1) or (2)---and hence both---holds, the identity map ${\sf id}_\Omega: \Omega\to\Omega$ extends continuously as a surjective continuous map  $\overline{\Omega}^G \to \overline{\Omega}$.
\end{theorem}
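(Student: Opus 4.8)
The plan is to establish the two implications $(1)\Rightarrow(2)$ and $(2)\Rightarrow(1)$ separately, with the continuous extension of $\id_\Omega$ handled as a byproduct of the machinery built for $(2)\Rightarrow(1)$. For the direction $(1)\Rightarrow(2)$, the key point is that global Gromov hyperbolicity and global visibility can be localized: fix $p\in\partial\Omega$ and a small ball $U_p$. The localization estimates for the Kobayashi distance (of Royden–Graham type) give, on $\Omega\cap U_p$ and near $p$, a two-sided comparison $K_{\Omega\cap U_p}\asymp K_\Omega + O(1)$, from which completeness and hyperbolicity of $(\Omega\cap U_p,K_{\Omega\cap U_p})$ follow once we know $\Omega$ is complete hyperbolic (which follows from global visibility). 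Since a rough-isometry-type comparison of the two distances near $p$ transfers thinness of triangles and the visibility of boundary points, we get local Gromov hyperbolicity and local visibility at $p$. The comparison must be done carefully because it is only valid on a neighborhood of $p$, so one restricts to $V_p'\subset\subset U_p$ and checks that geodesics between points of $\Omega\cap V_p'$ close to $\partial\Omega$ stay in a slightly larger neighborhood — this is exactly where one invokes global visibility to force such geodesics to bend, hence remain deep inside $U_p$.

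The substantive direction is $(2)\Rightarrow(1)$. Here the strategy is to pass from local data to global data through a compactness/covering argument on $\partial\Omega$. Cover $\partial\Omega$ by finitely many of the neighborhoods $V_p'$ (from local visibility) and $U_p$ (from local Gromov hyperbolicity); since $\partial\Omega$ is compact there is a Lebesgue number $\varepsilon_0>0$, so any two boundary points at Euclidean distance $<\varepsilon_0$ lie in a common $V_p'$. First I would prove \emph{global visibility}: given distinct $p,q\in\partial\Omega$, if $|p-q|<\varepsilon_0$ we are in a single local chart and local visibility plus the localization comparison $K_{\Omega\cap V_p}\ge K_\Omega$ (which holds trivially, as a smaller domain has a larger Kobayashi distance) lets us deduce that geodesics for $K_\Omega$ between points near $p$ and near $q$ bend, provided we also control that they do not escape $V_p$; if $|p-q|\ge\varepsilon_0$ the two points are ``Euclidean-separated'' and one uses a combination of the local visibility near each and an interior estimate to produce the desired compact set — this case is lighter because the neighborhoods $V_p,V_q$ can be taken disjoint at the outset. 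Then \emph{global Gromov hyperbolicity} is obtained by showing that geodesic triangles are uniformly thin: a geodesic triangle with all vertices deep in the interior is thin by a standard compactness argument (the Kobayashi distance is locally comparable to a smooth negatively curved metric, or one argues directly), while a triangle with some vertex near $\partial\Omega$ is, by global visibility just proved, forced to pass through a fixed compact set, which lets one cut it into boundedly many pieces each lying in a single localizing neighborhood $U_p$ where $\delta$-thinness is available, and then assemble a global $\delta'$ using the standard fact that a finite concatenation of thin triangles is thin.

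For the final assertion, the continuous surjective extension $\overline\Omega^G\to\overline\Omega$ of $\id_\Omega$ is constructed once global visibility and Gromov hyperbolicity are in hand: visibility implies that every geodesic ray has a Euclidean limit on $\partial\Omega$ and that Gromov-equivalent rays have the same limit, so the assignment ``Gromov boundary point $\mapsto$ Euclidean limit'' is well defined; continuity at boundary points of $\overline\Omega^G$ follows from the visibility neighborhoods shrinking to a point, and surjectivity onto $\partial\Omega$ follows because every boundary point is the Euclidean limit of some sequence, which (by Gromov hyperbolicity/completeness) subconverges in $\overline\Omega^G$. I expect the main obstacle to be the bookkeeping in $(2)\Rightarrow(1)$: precisely quantifying how far a geodesic for $K_\Omega$ can stray from the localizing neighborhood before visibility forces it back, and then patching the finitely many local $\delta$'s and local compact sets into genuinely global ones. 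The technical heart is the localization inequality for the Kobayashi distance and the claim that a geodesic in $\Omega$ whose endpoints are close to a common boundary point stays in the corresponding $U_p$ — this is where one must combine visibility (to bend the geodesic) with the crude lower bound $K_\Omega\le K_{\Omega\cap U_p}$ (to transfer the bending back to $\Omega$).
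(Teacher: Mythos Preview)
Your direction $(1)\Rightarrow(2)$ is vastly overcomplicated: the definitions of ``locally Gromov hyperbolic'' and ``locally visible'' only ask for \emph{some} neighborhood $U_p$ (resp.\ $V_p$) with the required property, so you may simply take $U_p=V_p$ to be any ball containing $\overline{\Omega}$; then $\Omega\cap U_p=\Omega$ and you are done. The paper dismisses this direction in one line.

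For $(2)\Rightarrow(1)$ your proposal has the right ingredients but a genuine gap at what you yourself flag as the ``technical heart'': the claim that a $K_\Omega$-geodesic with both endpoints near a boundary point $p$ \emph{stays} in the localizing neighborhood $U_p$ is simply false in general, and visibility does not ``force it back'' into $U_p$---it only guarantees the geodesic meets some compactum. The paper's key device (its Lemma~\ref{lem-loc}) is precisely to handle this: when $[p_\nu,q_\nu]_\Omega\not\subset V$, one uses local visibility to find exit/entry points $p'_\nu,q'_\nu\in\partial W$ that lie in a fixed compactum of $\Omega$, and then \emph{replaces} the escaping middle piece $[p'_\nu,q'_\nu]_\Omega$ by the $K_{\Omega\cap U_p}$-geodesic $[p'_\nu,q'_\nu]_{\Omega\cap U_p}$, obtaining an $(A,B)$-quasi-geodesic for $K_{\Omega\cap U_p}$ that does stay inside. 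Without this surgery step your patching scheme cannot proceed.

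Your plan to prove global thinness directly---interior triangles thin ``by a standard compactness argument'', boundary triangles by cutting into local pieces---does not work as stated: there is no fixed compactum containing all ``interior'' triangles, and the Kobayashi metric on a general domain is certainly not locally comparable to a negatively curved metric. The paper instead argues by contradiction: assume a sequence of triangles with $a_\nu\in[x_\nu,y_\nu]_\Omega$ satisfying $K_\Omega(a_\nu,[x_\nu,z_\nu]_\Omega\cup[y_\nu,z_\nu]_\Omega)\geq\nu$, pass to subsequential limits $x_\infty,y_\infty,z_\infty,a_\infty\in\overline{\Omega}$, and then run a (long) case analysis on which of these lie on $\partial\Omega$ and which coincide. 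In each case the surgery lemma above converts the relevant $K_\Omega$-geodesic arcs into $(A,B)$-quasi-geodesics for some $K_{\Omega\cap U_p}$, and local Gromov hyperbolicity plus geodesic stability yields a uniform bound contradicting the assumption. This contradiction framework is what makes the ``interior'' case trivial (bounded diameter) and provides the compactness you were reaching for.
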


We point out that, in the previous theorem,  given $p\in \partial \Omega$, the open neighborhood $U_p$ of $p$ such that $(U_p\cap \Omega, K_{U_p\cap \Omega})$ is  Gromov hyperbolic might be different from the open neighborhood $V_p$ of $p$ such that every couple of points of $\partial \Omega\cap V'_p$ satisfies the visibility condition with respect to $K_{V_p\cap \Omega}$. However, it turns out, see Lemma ~\ref{Lem:equiv-vis}, that  also $U_p\cap \Omega$  is locally visible at $p$.

We also point out (see Proposition~\ref{Prop:loc-vis-forever}) that, if  $\Omega$ is complete hyperbolic Gromov hyperbolic and visible, then the visibility condition holds for any open neighborhood $U$ of any boundary point, provided $U\cap \Omega$ is connected and complete hyperbolic.

\begin{definition}
If $(\Omega,K_{\Omega})$ is Gromov hyperbolic and the identity map $\id_\Omega:\Omega\to \Omega$ extends as a homeomorphism from the Gromov closure $\overline{\Omega}^G$ to  the Euclidean closure $\overline{\Omega}$, we say that $\Omega$ is a {\sl Gromov model domain}.
\end{definition}

With this definition at hand, our initial discussion can be summarized (cf. \cite{BNT}) as follows:

\begin{remark}
Let $\Omega_1, \Omega_2\subset \C^d$ be  bounded domains and let $F:\Omega_1\to \Omega_2$ be a biholomorphism. Assume $\Omega_1$ is a Gromov model domain (since $F$ is an isometry for the Kobayashi distance, it follows that $(\Omega_2, K_{\Omega_2})$ is Gromov hyperbolic as well). Then,
\begin{enumerate}
\item  $\Omega_2$ is a Gromov model domain if and only if $F$ extends as a homeomorphism from $\overline{\Omega_1}$ to $\overline{\Omega_2}$.
\item the identity map ${\sf id}_{\Omega_2}: \Omega_2\to\Omega_2$ extends continuously as a surjective continuous map  $\overline{\Omega_2}^G \to \overline{\Omega_2}$ if and only if $F$ extends as a continuous surjective map from $\overline{\Omega_1}$ to $\overline{\Omega_2}$.
\end{enumerate}
\end{remark}

If $D\subset\subset \C^d$ is a domain, following \cite{BNT}, we say that a geodesic line $\gamma:(-\infty,+\infty)\to D$ is a {\sl geodesic loop in $\overline{D}$} if $\gamma$ has the same cluster set $\Gamma$ in $\overline{D}$ at $+\infty$ and $-\infty$. In such a case we say that $\Gamma$ is the {\sl vertex} of the geodesic loop $\gamma$.

Rephrasing  \cite[Thm. 3.3]{BNT} we have that {\sl a complete hyperbolic, Gromov hyperbolic, visible bounded domain is a Gromov model domain if and only if it has no geodesic loops}. For the sake of completeness, in Lemma~\ref{Lem:model-to-vis}, we give a direct proof that any Gromov model domain is visible and has no geodesic loops.

Here we can ``localize'' such a result.

\begin{definition} Let $\Omega\subset\C^d$ be a bounded domain and $p\in \partial \Omega$. We say that
 $\Omega$  {\sl has no local geodesic loops at $p$} if  there exists  an open neighborhood $W_p$ of $p$ such that $\Omega \cap W_p$ is connected, $(\Omega \cap W_p, K_{\Omega \cap W_p})$ is  complete  hyperbolic and there is no geodesic loops for $K_{\Omega \cap W_p}$ whose vertex contains $p$.

If $\Omega$ has no local geodesic loops at every $p\in\partial \Omega$ we say that $\Omega$  {\sl has no local geodesic loops}.
\end{definition}

\begin{theorem}\label{thm-locglob2}
Let $\Omega \subset \subset \mathbb C^d$.
Then $\Omega$ is a Gromov model domain if and only if it is locally Gromov hyperbolic, locally visible and has no local geodesic loops.
\end{theorem}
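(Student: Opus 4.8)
The plan is to reduce Theorem~\ref{thm-locglob2} to the global characterization already at our disposal and then close the gap between the local and global ``no geodesic loops'' conditions by a localization argument for the Kobayashi distance.

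\textbf{Step 1: reduction.} First I would record that a Gromov model domain $\Omega$ is automatically visible. If $\pi_\Omega\colon\overline{\Omega}^G\to\overline{\Omega}$ is the homeomorphic extension of $\mathsf{id}_\Omega$, then two distinct Euclidean boundary points $p\neq q$ correspond to distinct Gromov boundary points $\xi=\pi_\Omega^{-1}(p)$, $\eta=\pi_\Omega^{-1}(q)$; hence whenever $x\to p$ and $y\to q$ in $\overline{\Omega}$ we have $x\to\xi$ and $y\to\eta$ in $\overline{\Omega}^G$, and the standard comparison between the $K_\Omega$-distance from a fixed base point $o$ to a geodesic and the corresponding Gromov product, together with $(x\,|\,y)_o\to(\xi\,|\,\eta)_o<+\infty$, forces $[x,y]_\Omega$ to meet a fixed closed $K_\Omega$-ball around $o$, which is compact by completeness. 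So $\Omega$ is visible. Now by Theorem~\ref{thm-locglob}, ``$\Omega$ locally Gromov hyperbolic and locally visible'' is equivalent to ``$\Omega$ Gromov hyperbolic and visible'', and by the rephrasing of \cite[Thm.~3.3]{BNT} a Gromov hyperbolic visible domain is a Gromov model domain if and only if it has no geodesic loops. Combining these, the theorem is equivalent to the assertion that, for a bounded domain $\Omega$ that is Gromov hyperbolic and visible, $\Omega$ has no geodesic loops if and only if it has no local geodesic loops.

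\textbf{Step 2: structure of loops.} Under the standing hypothesis the map $\pi_\Omega\colon\overline{\Omega}^G\to\overline{\Omega}$ of Theorem~\ref{thm-locglob} is a continuous surjection which is injective exactly when $\Omega$ has no geodesic loop. Since a geodesic ray converging to $\xi$ in the Gromov topology clusters, in the Euclidean topology, only at $\pi_\Omega(\xi)$, and the two Gromov endpoints of a geodesic line are always distinct, I would note that every $K_\Omega$-geodesic loop has a one-point vertex $\{p\}$ and leaves each neighbourhood of $p$ only along a relatively compact sub-arc; the same holds for $K_{\Omega\cap W}$ near $p$ whenever $W$ is a neighbourhood of $p$ with $\Omega\cap W$ complete hyperbolic and locally visible at $p$ (here one may invoke Lemma~\ref{Lem:equiv-vis} and Proposition~\ref{Prop:loc-vis-forever} to normalize the choice of neighbourhoods).

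\textbf{Step 3: transfer of loops.} To show ``no local geodesic loops $\Rightarrow$ no geodesic loops'' I would argue by contradiction: given a $K_\Omega$-geodesic loop $\gamma$ with vertex $\{p\}$ and the neighbourhood $W_p$ witnessing the absence of local loops at $p$, choose $s_n\to-\infty$, $t_n\to+\infty$ with $\gamma(s_n),\gamma(t_n)\to p$, so that for large $n$ the segment $\gamma|_{[s_n,t_n]}$ is a $K_\Omega$-geodesic of length $t_n-s_n\to+\infty$ between points of $\Omega\cap W_p$ tending to $p$. Replacing it by a $K_{\Omega\cap W_p}$-geodesic $\sigma_n$ with the same endpoints, the inequality $K_\Omega\le K_{\Omega\cap W_p}$ gives $\mathrm{length}(\sigma_n)\ge t_n-s_n\to+\infty$. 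Then, using the localization of the Kobayashi distance near $p$ (a comparison of $K_{\Omega\cap W_p}$ and $K_\Omega$ on a smaller neighbourhood of $p$, as in the proof of Theorem~\ref{thm-locglob}), one shows that the suitably recentred $\sigma_n$ eventually stay in a fixed compact subset of $\Omega\cap W_p$ over parameter windows exhausting $\mathbb R$; properness of $(\Omega\cap W_p,K_{\Omega\cap W_p})$ then yields, along a subsequence, a locally uniform limit which is a bi-infinite $K_{\Omega\cap W_p}$-geodesic whose two ends cluster only at $p$, i.e.\ a $K_{\Omega\cap W_p}$-geodesic loop with vertex $\{p\}$---a contradiction. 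For the converse, ``no geodesic loops $\Rightarrow$ no local geodesic loops'', I would run the symmetric argument: if for some $p$ every admissible (i.e.\ giving a complete hyperbolic intersection) neighbourhood carried a $K_{\Omega\cap W}$-geodesic loop with vertex containing $p$, then shrinking such neighbourhoods to $\{p\}$, selecting points on the loops at mutual distance tending to infinity and converging to $p$, joining them by $K_\Omega$-geodesics (whose lengths tend to infinity by $K_\Omega\le K_{\Omega\cap W}$ together with the localization), and passing to a limit produces a $K_\Omega$-geodesic loop with vertex $\{p\}$.

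\textbf{Main obstacle.} The hard part, in both implications of Step~3, is the anchoring of the approximating geodesics. Because the two ends of a putative loop approach the \emph{same} Euclidean boundary point, plain two-point visibility does not keep the approximating geodesics away from the boundary, and one really needs the quantitative localization of the Kobayashi distance near $p$, in a form uniform enough to survive shrinking the neighbourhood, both to guarantee that the recentred geodesic segments subconverge to a genuine bi-infinite geodesic (rather than degenerating) and to check that the cluster set of the limit has not spread beyond $\{p\}$. Everything else is the formal bookkeeping of Step~1 combined with Theorem~\ref{thm-locglob} and the rephrasing of \cite[Thm.~3.3]{BNT}.
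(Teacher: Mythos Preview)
Your Step~1 reduction is correct and matches the paper's opening sentence exactly: by Theorem~\ref{thm-locglob} and \cite[Thm.~3.3]{BNT} the whole statement boils down to ``no local geodesic loops $\Leftrightarrow$ no geodesic loops'' for a domain that is already Gromov hyperbolic and visible.

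However, you are working much harder than necessary in Step~3, and the hard part you isolate is in fact a genuine gap in your argument.

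\textbf{The converse implication is free.} The direction ``no geodesic loops $\Rightarrow$ no local geodesic loops'' is trivial: in the definition of ``no local geodesic loops at $p$'' you are allowed to \emph{choose} the neighbourhood $W_p$, so take any open $W_p\supset\overline\Omega$. Then $\Omega\cap W_p=\Omega$, and the local condition is literally the global one. Your symmetric limit argument for this direction is superfluous.

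\textbf{The forward implication: the paper's route versus yours.} For ``no local geodesic loops $\Rightarrow$ no geodesic loops'' the paper does \emph{not} pass to fresh $K_{\Omega\cap W_p}$-geodesics and attempt a subsequential limit. Instead it exploits two facts already established for Theorem~\ref{thm-locglob}: (i) by Lemma~\ref{Lem:equiv-vis} one may take $W_p=U_p$, the Gromov-hyperbolic neighbourhood; (ii) Lemma~\ref{lem-loc} says that a $K_\Omega$-geodesic between two points near $p$ becomes, after replacing its compact excursion outside a smaller neighbourhood by a single $K_{\Omega\cap U_p}$-geodesic arc, an $(A,B)$-quasi-geodesic for $K_{\Omega\cap U_p}$. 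Applied to the global loop $\gamma$ (whose excursion away from $p$ is a single compact arc), this yields directly a \emph{quasi}-geodesic loop in $\Omega\cap U_p$ with vertex $p$. Geodesic stability (Remark~\ref{Rem:converse-inf}) then produces a genuine $K_{\Omega\cap U_p}$-geodesic line at bounded $K_{\Omega\cap U_p}$-distance from it, hence with the same vertex $\{p\}$---contradiction. No anchoring or limit extraction is needed, because the global loop itself (mildly edited) already lives in $\Omega\cap U_p$ as a quasi-geodesic.

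\textbf{Why your route stalls.} Your $\sigma_n=[\gamma(s_n),\gamma(t_n)]_{\Omega\cap W_p}$ have both endpoints converging to the \emph{same} boundary point $p$, so neither visibility nor a raw comparison $K_\Omega\le K_{\Omega\cap W_p}$ forces them to meet a fixed compactum. To get the Gromov product $(\gamma(s_n)\,|\,\gamma(t_n))_o^{\Omega\cap W_p}$ bounded you would need upper bounds of the form $K_{\Omega\cap W_p}(\gamma(s_n),o)\le K_\Omega(\gamma(s_n),o)+C$, and those are exactly what fails when the $K_\Omega$-geodesic from $\gamma(s_n)$ to $o$ leaves the localisation neighbourhood; repairing this forces you back to Lemma~\ref{lem-loc}. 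In short, the ``main obstacle'' you flag is real, and the paper resolves it by never leaving the quasi-geodesic framework rather than by an anchoring-and-limit argument.
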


There is no complete characterization of Gromov model domains, however, it is known that the following are Gromov model domains:
\begin{enumerate}
\item bounded smooth  strongly pseudoconvex domains (see \cite{BB}),
\item bounded smooth convex domains of finite D'Angelo type (see \cite{Z1}),
\item Gromov hyperbolic (with respect to the Kobayashi distance) bounded or unbounded convex domains (see \cite{BGZ}),
\item bounded smooth pseudoconvex domains of finite D'Angelo type in $\C^2$ (see \cite{Fia}),
\item bounded Gromov hyperbolic (with respect to the Kobayashi distance) $\C$-convex domains with Lipschitz boundary (see Proposition~\ref{cc})
\item any domain biholomorphic to a Gromov model domain such that the biholomorphism extends as a homeomorphism up to the boundary.
\end{enumerate}

Theorem~\ref{thm-locglob2}  allows us to ``localize'' the previous list as follows:

\begin{corollary}\label{Cor:localmodel-global}
Let $\Omega \subset \subset \mathbb C^d$ be a domain. Suppose that for every $p\in \partial \Omega$ there exists an open neighborhood $U_p$ of $p$ such that $U_p\cap \Omega$ is a Gromov model domain. Then $(\Omega, K_\Omega)$ is  a Gromov model domain.
\end{corollary}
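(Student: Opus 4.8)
The plan is to deduce the statement directly from Theorem~\ref{thm-locglob2}: it is enough to show that the hypothesis forces $\Omega$ to be locally Gromov hyperbolic, locally visible and to have no local geodesic loops, since then Theorem~\ref{thm-locglob2} immediately gives that $(\Omega,K_\Omega)$ is a Gromov model domain.

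Fix $p\in\partial\Omega$ and let $U_p$ be the open neighborhood provided by the hypothesis, so that $D:=U_p\cap\Omega$ is a Gromov model domain; in particular $D$ is a connected, bounded, complete hyperbolic domain with $(D,K_D)$ Gromov hyperbolic, and $\id_D$ extends to a homeomorphism $\overline{D}^G\to\overline{D}$. First I would record two elementary facts. Choosing a Euclidean ball $B=B(p,\varepsilon)$ with $\overline{B}\subseteq U_p$, one has $B\cap D=B\cap\Omega$; consequently $p\in\partial D$, and for every $q\in\partial\Omega\cap B$ also $q\in\partial D$ (a small ball about such a $q$ lies inside $U_p$, where $\Omega$ and $D$ agree). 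Thus near $p$ the two domains, and their boundaries, coincide. I would also recall, from the circle of results of \cite{BNT}, that a Gromov model domain is visible and has no geodesic loops in its Euclidean closure.

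Now I would verify the three local conditions at $p$, each time taking the witnessing neighborhood to be $U_p$ itself, so that the relevant Kobayashi metric is the fixed one $K_D$. Local Gromov hyperbolicity at $p$ is immediate because $D=\Omega\cap U_p$ is connected, complete hyperbolic and Gromov hyperbolic. For the absence of local geodesic loops, since $D$ is a Gromov model domain it has no geodesic loop in $\overline{D}$ at all, in particular none whose vertex contains $p$, so $W_p:=U_p$ works. For local visibility, since $D$ is visible every couple of points of $\partial D$ has the visibility condition for geodesics of $K_D$; taking $V_p:=U_p$ and $V_p':=B(p,\varepsilon)$, every couple $q_1,q_2\in\partial\Omega\cap V_p'$ lies in $\partial D$ by the second elementary fact above, and therefore has the visibility condition for $K_{\Omega\cap V_p}=K_D$. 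Since $p\in\partial\Omega$ was arbitrary, the three hypotheses of Theorem~\ref{thm-locglob2} hold and the conclusion follows.

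The proof has essentially no difficult step; the only point that requires care is the boundary bookkeeping. One must resist the temptation to shrink $U_p$ when certifying the local properties of $\Omega$ at $p$, since shrinking it would change the ambient Kobayashi metric and could destroy connectedness of the slice. The clean solution is to keep $U_p$ fixed throughout and to shrink only the auxiliary ``target'' neighborhood $V_p'$, so that $\partial\Omega\cap V_p'\subseteq\partial D$. If one prefers not to use ``Gromov model domain $\Rightarrow$ visible / no geodesic loops'' as a citation, one can instead apply Theorem~\ref{thm-locglob2} to $D$ itself to extract its local visibility and loop-freeness at $p\in\partial D$; this also works, but the matching of neighborhoods then becomes slightly more delicate, which is why invoking the global properties of $D$ is the more comfortable route.
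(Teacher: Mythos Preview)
Your proof is correct and follows essentially the same route as the paper's: both observe that a Gromov model domain is (globally) visible and free of geodesic loops, so $U_p\cap\Omega$ witnesses all three local conditions at $p$, and Theorem~\ref{thm-locglob2} applies. You are simply more explicit than the paper about two bookkeeping points it leaves implicit --- that $\partial\Omega\cap V_p'\subseteq\partial(U_p\cap\Omega)$ and that one should keep $V_p=U_p$ fixed while shrinking only $V_p'$ --- but the argument is the same.
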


In particular, if $\Omega\subset\subset \C^d$ is a  domain such that there exists an open covering $\{U_j\}$ of $\partial \Omega$ so that $\Omega\cap U_j$ is biholomorphic to any domain of the previous list of Gromov model domains and the biholomorphism extends as a homeomorphism up to the closure, then $(\Omega, K_\Omega)$ is complete hyperbolic, Gromov hyperbolic, visible and $\overline{\Omega}^G$ is naturally homeomorphic to $\overline{\Omega}$.

The proofs of  Theorem~\ref{thm-locglob}, Theorem~\ref{thm-locglob2} and Corollary~\ref{Cor:localmodel-global} will be given in Section~\ref{S2}.

Finally, in Section~\ref{sec:c-convex},  we consider $\C$-convex
domains with Lipschitz boundary and prove item~(5) of the previous list.

\medskip

\noindent {\sl Acknowledgments}. This work originated from conversations among the authors during the INdAM Workshop ``Gromov hyperbolicity and negative curvature in complex analysis'' held at Palazzone Cortona, Italy  6-10 September 2021.

The authors  warmly thank the two anonymous referees for many useful comments which improved the original manuscript.

\section{Preliminary results}

Let $(X,d)$ be a metric space. If $\gamma:[0,1] \rightarrow X$ is an absolutely continuous curve and $0 \leq s < t \leq 1$, we denote by $l_d(\gamma; [s,t])$ the length of the restriction of $\gamma$ to $[s,t]$.

For an absolutely continuous curve $\gamma:[0,1] \rightarrow X$, we denote by $l_{d}(\gamma;[s,t])$  the length of the curve $\gamma$ on $[s,t]$, which is defined as the total variation $\sup\sum_{i=1}^k d(\gamma(t_{i-1}), \gamma(t_i))$, where the supremum is taken over all partitions $s=t_0<t_1<\ldots<t_k=t$. If $X$ is a manifold and the distance $d$ is  defined  by a inner Finsler metric $F$ on the tangent bundle $TX$ (as it happens for the Kobayashi distance), then $l_{d}(\gamma;[s,t])=\int_s^t F(\gamma(\tau);\gamma'(\tau))d\tau$ and $d(\gamma(s),\gamma(t))\leq l_{d}(\gamma;[s,t])$ for all $0 \leq s < t \leq 1$ (see, {\sl e.g.}, \cite{Kob}).  

Let $A > 1$ and $B > 0$. An absolutely continuous curve $\gamma:[0,1] \rightarrow X$ is called a $(A,B)$-quasi-geodesic if for every $0 \leq s < t \leq 1$, we have :
$$
\frac{1}{A} d(\gamma(s),\gamma(t)) - B\leq l_{d}(\gamma;[s,t]) \leq A d(\gamma(s),\gamma(t)) + B.
$$

Let $(X,d)$ be a  geodesic metric space. For $x,y \in X$, we denote by $[x,y]_X$ a geodesic segment joining $x$ and $y$. A geodesic triangle $T$ is the union of 3 geodesic segments (called {\sl sides}) $T=[x,y]_X \cup [y,z]_X \cup [z,x]_X$ joining 3 points $x,\ y, z \in X$.

A  geodesic metric space $(X,d)$ is {\sl Gromov hyperbolic} if there exists $\delta > 0$
such that  every geodesic triangle $T$
is $\delta$-thin, that is,
every point on a side of $T$ has distance from the union of the other two sides less than or equal to $\delta$.

A  geodesic metric space $(X,d)$ is called {\sl geodesically stable} if for every $A > 1$ and $B > 0$ there exists $M > 0$ with the following property :
If $\gamma : [0, 1] \rightarrow X$ is a $(A,B)$-quasi-geodesic, there exists a geodesic segment $[\gamma(0),\gamma(1)]_X$ such that $\gamma([0,1]) \subset \mathcal N^d_{M}\left([\gamma(0),\gamma(1)]_X\right)$, where if $K\subset \Omega$, we let
\[
\mathcal N^d_M\left(K\right):=\{x \in X : d(x,K) < M\}.
\]

We have (see, for instance, \cite{bon}, Section 3 p.295) :

\begin{theorem}[Geodesic stability] Let $(X,d)$ be a geodesic metric space. Then the following conditions are equivalent:
\begin{itemize}
\item[(a)] $(X,d)$ is Gromov hyperbolic,
\item[(b)] $(X,d)$ is geodesically stable.
\end{itemize}
\end{theorem}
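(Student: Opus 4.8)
This statement is Bonk's equivalence between Gromov hyperbolicity and stability of quasi-geodesics~\cite{bon}; the plan is to prove the two implications separately, the first being the Morse stability lemma and the second its (harder) converse.

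\textbf{(a)$\Rightarrow$(b).} Given a $(A,B)$ quasi-geodesic $\gamma\colon[0,1]\to X$, reparametrise it by arc length; the length condition then becomes the two-sided bound $\frac1A|u-v|-\frac BA\le d(\bar\gamma(u),\bar\gamma(v))\le|u-v|$, so $\gamma$ is a quasi-geodesic in the usual sense with constants depending only on $A,B$, and since the stability property asks only for a one-sided inclusion it suffices to show $\gamma([0,1])\subset\mathcal N^d_M([\gamma(0),\gamma(1)]_X)$ for $M=M(\delta,A,B)$. First I would prove the dyadic-subdivision estimate: in a space with $\delta$-thin triangles, every point of a geodesic $[u,v]_X$ lies within $\delta\log_2\ell(c)+C$ of the image of any rectifiable path $c$ from $u$ to $v$ (halve $c$ repeatedly, chain the resulting triangles, sum the $\delta$'s). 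Then let $\gamma(t_0)$ realise $D:=\sup_t d(\gamma(t),[\gamma(0),\gamma(1)]_X)$ (truncating if $D=\infty$); choose $t_-<t_0<t_+$ so that the sub-arcs $\gamma|_{[t_-,t_0]}$ and $\gamma|_{[t_0,t_+]}$ each have length about $2D$ (or reach an endpoint); project $\gamma(t_\pm)$ to points $x_\pm\in[\gamma(0),\gamma(1)]_X$ and form the path from $x_-$ to $x_+$ made of $[x_-,\gamma(t_-)]_X$, then $\gamma|_{[t_-,t_+]}$, then $[\gamma(t_+),x_+]_X$. Applying the dyadic estimate to this path relative to the geodesic subsegment $[x_-,x_+]\subset[\gamma(0),\gamma(1)]_X$, observing that $\gamma(t_0)$ sits at distance $\ge D$ from that path, and bounding $\ell(\gamma|_{[t_-,t_+]})\le A\,d(\gamma(t_-),\gamma(t_+))+B=O(D)$ by the quasi-geodesic condition, one obtains $D\le\delta\log_2(O(D))+C$, which forces $D$ bounded in terms of $\delta,A,B$ only. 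This extremal-point estimate is the technical core of the direction.

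\textbf{(b)$\Rightarrow$(a).} Assume geodesic stability and set $M_0:=M(2,0)$. One wants a uniform $\delta$ making all geodesic triangles $\delta$-thin; argue by contradiction, so that for each $n$ there is a geodesic triangle with a point on one side at distance $>n$ from the union of the other two. Following Bonk~\cite{bon}, one uses these increasingly fat triangles to build, by an inductive subdivision-and-concatenation procedure, paths joining two of the vertices which are $(A,B)$ quasi-geodesics with $A,B$ depending only on $M_0$ but whose distance from the geodesic between their endpoints grows with $n$; this contradicts stability for $n$ large, and the resulting uniform thinness (after symmetrising over the three sides) yields (a). The main obstacle is exactly this construction: producing the auxiliary quasi-geodesics with parameters that do not degrade as the triangles fatten is the delicate part, carried out in~\cite[\S3]{bon}; granted that, the equivalence follows.
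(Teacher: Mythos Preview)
The paper does not prove this theorem at all: it merely states it and cites Bonk~\cite{bon}, Section~3. So there is no ``paper's proof'' to compare against; you are supplying an argument where the authors chose to quote the literature.

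Your sketch is the standard one and is essentially correct. Two minor comments. First, in the (a)$\Rightarrow$(b) direction your phrasing of the extremal-point step is slightly garbled: the dyadic-subdivision lemma tells you that every point of the \emph{geodesic} $[x_-,x_+]$ lies within $\delta\log_2\ell(c)+C$ of the \emph{path} $c$, not the other way round; and $\gamma(t_0)$ lies \emph{on} your concatenated path $c$, so ``$\gamma(t_0)$ sits at distance $\ge D$ from that path'' cannot be what you mean. The usual fix is either to run the two-step version (dyadic lemma gives geodesic $\subset\mathcal N_{M_1}(\gamma)$, then a connectedness argument gives $\gamma\subset\mathcal N_{M_2}(\text{geodesic})$), or to argue that the alternative path $[\gamma(t_-),x_-]\cup[x_-,x_+]\cup[x_+,\gamma(t_+)]$ stays at distance $\gtrsim D/A$ from $\gamma(t_0)$ (using the lower bound $d(\gamma(t_0),\gamma(t_\pm))\ge(2D-B)/A$ coming from the length definition of quasi-geodesic), and then combine with the dyadic estimate applied to a geodesic $[\gamma(t_-),\gamma(t_+)]$. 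Either route yields $D\le\delta\log_2(O(D))+C$ and hence a bound on $D$. Second, for (b)$\Rightarrow$(a) you correctly identify Bonk's construction as the substantive step and defer to~\cite{bon}; since the paper itself simply cites the same source, this is entirely in keeping with its treatment.
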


\begin{remark}\label{Rem:converse-inf}
If $(X,d)$ is Gromov hyperbolic then for every $A, A'\geq 1$ and $B, B'\geq 0$ there exists $M=M(A,A', B, B')>0$ such that, if $\gamma:[0,1]\to X$ is a $(A,B)$-quasi-geodesic and $\eta:[0,1]\to X$ is a $(A',B')$-quasi-geodesic with $\gamma(0)=\eta(0)$ and $\gamma(1)=\eta(1)$ we have
$\gamma([0,1])\subset \mathcal N^d_M\left(\eta([0,1]\right))$.
\end{remark}

We first start with the following lemmas. As remarked before, if $\Omega\subset\C^d$ is a domain and $\gamma:[0,1]\to \Omega$ is an absolutely continuous curve, then $l_{K_{\Omega}}(\gamma;[0,1])$ denotes its integrated length with respect to the Kobayashi metric, which coincides with the total variation of the curve with respect to the Kobayashi distance, that is, 
\[
l_{K_{\Omega}}(\gamma;[0,1])=\int_0^1 k_\Omega(\gamma(t);\gamma'(t))dt.
\]

\begin{lemma}\label{lem-loc0}Let $\Omega \subset \subset \mathbb C^d$. Then, for every neighborhood $U$ of $p \in \partial \Omega$ and for every neighborhood $V$ of $p$, $V \subset \subset U$, there exists $A > 0$, such that every absolutely continuous curve $\gamma:[0,1] \rightarrow \Omega \cap V$ satisfies :
$$
l_{K_{\Omega \cap U}}(\gamma;[0,1]) \leq A l_{K_{\Omega}}(\gamma;[0,1]).
$$
In particular, if $[x,y]_{\Omega} \subset V$, then $[x,y]_{\Omega}$ is a $(A,0)$-quasi-geodesic segment for $K_{\Omega \cap U}$.
\end{lemma}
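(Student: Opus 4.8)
The plan is to compare the Kobayashi infinitesimal metrics $k_{\Omega\cap U}$ and $k_\Omega$ pointwise on $\Omega\cap V$ and integrate. The key observation is the following localization estimate for the Kobayashi metric: since $\Omega$ is bounded and $V\subset\subset U$, there exists a constant $A=A(\Omega,U,V)>0$ such that
\[
k_{\Omega\cap U}(z;v)\le A\, k_{\Omega}(z;v)\qquad\text{for all }z\in\Omega\cap V,\ v\in\C^d.
\]
Indeed, the inequality $k_\Omega\le k_{\Omega\cap U}$ always holds by the decreasing property of the Kobayashi metric under inclusions. For the reverse comparison up to a constant, one uses a standard Royden-type localization argument: pick an analytic disc $f:\D\to\Omega$ with $f(0)=z$, $f'(0)=\lambda v$ realizing (nearly) the Kobayashi metric $k_\Omega(z;v)$; since $z\in V\subset\subset U$ and $\Omega$ is bounded, there is a fixed $r=r(U,V,\diam\Omega)\in(0,1)$ such that $f(r\D)\subseteq \Omega\cap U$ — this is because the disc cannot leave $U$ too fast, as the Kobayashi (equivalently, in the bounded setting, a comparison with the distance to $\partial U$) controls how far $f(\zeta)$ can travel from $f(0)=z$ for $|\zeta|\le r$. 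Then $\zeta\mapsto f(r\zeta)$ is an analytic disc into $\Omega\cap U$ with derivative $r\lambda v$ at the origin, giving $k_{\Omega\cap U}(z;v)\le \tfrac1r k_\Omega(z;v)$. So $A=1/r$ works. (Alternatively, one invokes directly a known localization theorem for the Kobayashi metric, e.g. the one in Royden or in Forstneri\v{c}--Rosay, which yields exactly such an estimate on a relatively compact subset.)

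Granting the pointwise estimate, the length comparison is immediate: for an absolutely continuous curve $\gamma:[0,1]\to\Omega\cap V$,
\[
l_{K_{\Omega\cap U}}(\gamma;[0,1])=\int_0^1 k_{\Omega\cap U}(\gamma(t);\gamma'(t))\,dt
\le A\int_0^1 k_{\Omega}(\gamma(t);\gamma'(t))\,dt
= A\, l_{K_\Omega}(\gamma;[0,1]),
\]
which is the asserted inequality. For the final assertion, suppose $[x,y]_\Omega\subseteq V$ is a geodesic segment for $K_\Omega$, parametrized on $[0,1]$. For any $0\le s<t\le 1$, since the restriction of a $K_\Omega$-geodesic is still a $K_\Omega$-geodesic, the length of $[x,y]_\Omega$ restricted to $[s,t]$ equals $K_\Omega(\gamma(s),\gamma(t))$; applying the length inequality to this subcurve and using $K_{\Omega\cap U}\le l_{K_{\Omega\cap U}}$ on the subcurve gives
\[
l_{K_{\Omega\cap U}}(\gamma;[s,t])\le A\, l_{K_\Omega}(\gamma;[s,t])=A\,K_\Omega(\gamma(s),\gamma(t))\le A\,K_{\Omega\cap U}(\gamma(s),\gamma(t)),
\]
the last step because $\Omega\cap U\subseteq\Omega$ forces $K_\Omega\le K_{\Omega\cap U}$. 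Hence $[x,y]_\Omega$ is an $(A,0)$-quasi-geodesic for $K_{\Omega\cap U}$.

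The main obstacle is the pointwise metric comparison $k_{\Omega\cap U}\le A\,k_\Omega$ on $\Omega\cap V$ — specifically, producing the uniform radius $r$ so that analytic discs nearly extremal for $k_\Omega$ at a point of $V$ stay inside $\Omega\cap U$ on $r\D$. This requires a uniform (in the base point $z\in\Omega\cap V$) control, which is where boundedness of $\Omega$ and $V\subset\subset U$ enter essentially; the bound on the travel distance $|f(\zeta)-z|$ comes from the Schwarz–Pick-type estimate combined with the fact that $k_\Omega(z;\cdot)$ is bounded above by the metric of a ball containing $\Omega$. Everything after that is routine integration and the monotonicity of Kobayashi distances under inclusion.
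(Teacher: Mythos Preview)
Your argument is correct and follows the same skeleton as the paper's proof: establish the pointwise bound $k_{\Omega\cap U}(z;v)\le A\,k_\Omega(z;v)$ for $z\in\Omega\cap V$, then integrate. The paper obtains the pointwise bound by quoting Royden's Localization Lemma in the sharp form $k_{\Omega\cap U}(z;v)\le\coth\bigl(K_\Omega(z,\Omega\setminus U)\bigr)\,k_\Omega(z;v)$ and noting that $K_\Omega(z,\Omega\setminus U)$ is bounded below on $V\cap\Omega$; you instead give a direct Schwarz-lemma argument (rescaling a nearly extremal disc so that it lands in $\Omega\cap U$), which is in fact the standard proof of a weak form of Royden's lemma. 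Your route is more self-contained; the paper's route gives a cleaner constant.

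One small slip in your last paragraph: you write that ``$k_\Omega(z;\cdot)$ is bounded above by the metric of a ball containing $\Omega$''. The inequality goes the other way (if $\Omega\subset B$ then $k_B\le k_\Omega$), and in any case it is not the relevant fact. What you actually need---and what your earlier sketch correctly uses---is that boundedness of $\Omega$ gives, via the classical Schwarz lemma applied to $f:\D\to\Omega\subset B$, a uniform bound $|f(\zeta)-f(0)|\le (\diam\Omega)\,|\zeta|$; combined with $\mathrm{dist}(V,\partial U)>0$ this yields the uniform radius $r$. With that correction the argument is complete.
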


Lemma~\ref{lem-loc0} is a direct consequence of the following Localization Lemma proved by H. Royden, (see also Lemma 2.1 in \cite{fu}) :

\vspace{1mm}
\noindent{\sl {\bf Localization Lemma.}
Let $\Omega \subset \subset \mathbb C^d$ and let $U$ be an open set such that $U\cap\Omega\neq \emptyset$.
Then
$$
k_{\Omega}(z;v) \leq k_{U\cap \Omega}(z;v) \leq \coth\left(K_{\Omega}(z, \Omega \setminus U)\right)k_{\Omega}(z;v),
$$
for any $z \in U$ and $v \in \mathbb C^d$, where $K_{\Omega}(z, \Omega \setminus U):=\inf_{w \in \Omega \setminus U}K_{\Omega}(z,w)$.
}

\vspace{2mm}
\begin{proof}[Proof of Lemma~\ref{lem-loc0}] Since $V\subset\subset U$, it follows that there exists $C>0$ such that $K_{\Omega}(z, \Omega \setminus U) \geq C$ for all $z\in V\cap \Omega$---hence there exists $A>0$ such that $\coth(K_{\Omega}(z, \Omega \setminus U))\leq A$  for all $z\in V\cap \Omega$.

Thus, taking into account that $\gamma([0,1]) \subset V$, by the Localization Lemma, we have:
\begin{equation*}
\begin{split}
l_{K_{\Omega \cap U}}(\gamma;[0,1]) & =   \int_0^1 k_{\Omega \cap U}(\gamma(t);\gamma'(t))dt\\
 & \leq  \int_0^1 \coth(K_{\Omega}(\gamma(t), \Omega \setminus U))k_{\Omega}(\gamma(t);\gamma'(t))dt\\
& \leq A \int_0^1 k_{\Omega}(\gamma(t);\gamma'(t))dt=A l_{K_{\Omega}}(\gamma;[0,1]),
 \end{split}
\end{equation*}
and we are done.
\end{proof}

\begin{lemma}\label{Lem:equiv-vis} Let $\Omega$ be a domain and $p\in\partial \Omega$. Assume $U$ is an open neighborhood of $p$ such that $U\cap \Omega$ is a complete hyperbolic domain and $(U\cap \Omega, K_{U\cap \Omega})$ is Gromov hyperbolic.  If  $\Omega$ is locally visible at $p$ then there exists an open connected neighborhood $V\subset\subset U$ of $p$ such that every couple of points $q_1, q_2\in V\cap\partial\Omega$, $q_1\neq q_2$, satisfies the visibility condition with respect to $K_{U\cap \Omega}$.
\end{lemma}
\begin{proof}
Let $W'\subseteq W$ be  open neighborhoods of $p$ such that $\Omega\cap W$ is complete hyperbolic and every couple $q_1, q_2\in \partial\Omega\cap W'$, $q_1\neq q_2$, satisfies the visibility condition with respect to $K_{\Omega\cap W}$.

Let $V$ be an open, connected neighborhood of $p$ such that $V \subset \subset U \cap W'$. We are going to show that every couple of points $q^1, q^2\in V\cap \partial\Omega$, $q^1\neq q^2$, satisfies the visibility condition with respect to $K_{U\cap\Omega}$.

To this aim, choose an open set $V'$
such that $V \subset \subset V' \subset \subset U \cap W$. By the Localization Lemma, there exist $C_1, C_2 >0$
such that for any $z\in V'$, $v\in \C^n$,
\begin{equation*}
\begin{split}
k_\Omega (z;v) &\leq k_{\Omega \cap U} (z;v) \leq C_1 k_\Omega (z;v)\\
k_\Omega (z;v) & \leq k_{\Omega \cap W} (z;v) \leq C_2 k_\Omega (z;v),
\end{split}
\end{equation*}
therefore, there is  $C>1$ such that
\begin{equation}
\label{metreq}
C^{-1} k_{\Omega \cap W} (z;v) \le k_{\Omega \cap U} (z;v) \le C k_{\Omega \cap W} (z;v).
\end{equation}

Let $q^1\neq q^2 \in V\cap \partial \Omega$, and take sequences  $\{q^j_k\}\subset V\cap \Omega$, such that $\lim_{k\to \infty} q^j_k=q^j$, $j=1,2$.

By the visibility hypothesis
on $W\cap \Omega$, the geodesics $[q^1_k,q^2_k]_{\Omega\cap W}$ intersect a fixed compact set $K$ for all $k$.
Choose $o_k \in [q^1_k,q^2_k]_{\Omega\cap W} \cap K$.

\medskip

{\sl Claim A.}  If $y_k\in \partial V \cap [q^1_k,q^2_k]_{\Omega\cap W}$, then $\{y_k\}$ is relatively compact in $W\cap \Omega$.

\medskip

Indeed, if this were not the case, we can assume, up to extracting subsequences, that $y_k\in [q^1_k,o_k]_{\Omega\cap W}$ and that $\lim_{k\to \infty}y_k=y_0\in \partial V\cap\partial \Omega$. Since $y_0\neq q^1$ by construction, the visibility hypothesis implies that there exists $x_k\in [q^1_k,y_k]_{\Omega\cap W}$ such that $\{x_k\}$ is relatively compact in $\Omega\cap W$. Hence, there exists $T>0$ such that for every $k$,
\[
T>K_{\Omega\cap W}(x_k,o_k)=K_{\Omega\cap W}(x_k,y_k)+K_{\Omega\cap W}(y_k,o_k),
\]
but the right hand side tend to $\infty$ since  $K_{\Omega\cap W}$ is complete, a contradiction and Claim A follows.
\smallskip

Now, if $[q^1_k,q^2_k]_{\Omega\cap W} \not\subset V$, let $q^3_k\in [q^1_k,q^2_k]_{\Omega\cap W} \cap\partial V$ be such that for all $z\in [q^1_k,q^3_k]_{\Omega\cap W}\setminus\{q^3_k\}$, it holds $z\in V$ and let $q^4_k\in [q^1_k,q^2_k]_{\Omega\cap W} \cap\partial V$ be such that for all $z\in [q^4_k,q^2_k]_{\Omega\cap W}\setminus\{q^4_k\}$ it holds $z\in V$.

Since by Claim A, $\{q^3_k\}$ and  $\{q^4_k\}$ are relatively compact in $W\cap \Omega$ and, by construction, they belong to $\partial V$, we can join $q^3_k$ and  $q^4_k$ with a smooth curve $\gamma_k$ such that $\{\gamma_k\}$ is relatively compact  in $V'\cap\Omega$. In particular, for every $k$ the length of $\gamma_k$ is  bounded  by a constant independent of $k$, with respect to any of the metrics $k_\Omega$,
$k_{\Omega\cap W}$, and $k_{\Omega\cap U}$.

Therefore, the curve $\Gamma_k:=[q^1_k,q^3_k]_{\Omega\cap W} \cup \gamma_k \cup [q^4_k,q^2_k]_{\Omega\cap W}$ is  a $(1,B')$-quasi-geodesic
 for $K_{\Omega\cap W}$, for some $B'>0$, and is contained in $V'$. By \eqref{metreq}, there exists $A\geq 1$ such that  for all $k$, $\Gamma_k$ is an
 $(A,B)$-quasi-geodesic for $K_{\Omega \cap U}$. Note that, by construction, $\Gamma_k$ intersects a fixed compact set $K$ in $\Omega\cap V'$.

By the Geodesic Stability Theorem applied to $\Omega \cap U$, any $K_{\Omega \cap U}$-geodesic $\eta_k$ between $q^1_k$
and $q^2_k$ must lie within an $M$-neighborhood of this $(A,B)$-quasi-geodesic, for some $M>0$ depending only on $A$ and $B$. Therefore $\eta_k$ intersects for every $k$ an $M$-neighborhood of $K$ with respect to $K_{\Omega \cap U}$. Since $K_{\Omega \cap U}$ is complete, such a set is also compact in $\Omega\cap U$, and we are done.
\end{proof}

\begin{lemma}\label{lem-vis}
Let $\Omega \subset \subset \mathbb C^d$ be a complete hyperbolic domain  and let $p \in \partial \Omega$.  Let $U_p$ be an open neighborhood of $p$ such that $(U_p\cap\Omega, K_{U_p\cap\Omega})$ is complete hyperbolic and Gromov hyperbolic. Assume there exists $V\subset\subset U_p$  an open neighborhood of $p$ such that every couple of points $q_1,q_2 \in \partial  \Omega\cap V$, $q_1 \neq q_2$, satisfies the visibility condition with respect to $K_{U_p\cap\Omega}$.
 Then for every $W\subset\subset V$ there exists $C > 0$ such that for all $x \in \Omega \cap W$ and $y \in \Omega \setminus \overline{V}$,
$$
\sup_{z \in [x,y]_{\Omega}}d_{\hbox{\tiny Eucl}}(z,\partial \Omega) \geq C,
$$
where, $d_{\hbox{\tiny Eucl}}$ denotes the Euclidean distance.
\end{lemma}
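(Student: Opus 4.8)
The plan is to argue by contradiction. Suppose no such $C>0$ exists; then there are sequences $x_k\in\Omega\cap W$ and $y_k\in\Omega\setminus\overline V$ such that the geodesics $[x_k,y_k]_\Omega$ come arbitrarily close to $\partial\Omega$, i.e. $\sup_{z\in[x_k,y_k]_\Omega}d_{\mathrm{Eucl}}(z,\partial\Omega)\to 0$. The first step is a reduction: since $d_{\mathrm{Eucl}}$ is small somewhere along each geodesic but $x_k$ stays in $W$ and $y_k$ stays outside $V$, I want to locate, along $[x_k,y_k]_\Omega$, a sub-arc that starts on $\partial W$, ends on $\partial V$ (or within a fixed shell between $W$ and $V$), stays inside $V$, and along which the Euclidean distance to $\partial\Omega$ tends to $0$. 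Concretely, I would walk along the geodesic from $x_k$ and let $a_k$ be the last point on $\partial W$ before the geodesic first exits $\overline V$, and $b_k$ the first point on $\partial V$ after $a_k$; the sub-arc $\sigma_k:=[a_k,b_k]_\Omega$ lies in $\overline V\setminus W$ (hence in $U_p$), and — possibly after passing to the component of the geodesic that does the escaping — we still have $\sup_{z\in\sigma_k}d_{\mathrm{Eucl}}(z,\partial\Omega)\to 0$, because the portion of the geodesic that achieves small Euclidean distance must at some point lie in the shell $\overline V\setminus W$ (the endpoints $x_k,y_k$ are at definite Euclidean distance from $\partial\Omega\cap(\overline V\setminus W)$? — careful: $y_k$ may be close to $\partial\Omega$ far from $p$, so I should instead choose $\sigma_k$ to be the portion between consecutive crossings of $\partial W$ and $\partial V$ on which the escape to the boundary actually happens, which is forced to exist since the whole geodesic goes from $W$ out past $V$).

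The second, and main, step is to extract a limiting geodesic and derive the contradiction with visibility. Up to subsequences, $a_k\to a_\infty\in\partial W$ and $b_k\to b_\infty\in\partial V$, but I claim that in fact $a_\infty,b_\infty\in\partial\Omega$: indeed, by Lemma~\ref{lem-loc0} the arc $\sigma_k$, being a $K_\Omega$-geodesic lying inside $V\subset\subset U_p$, is an $(A,0)$-quasi-geodesic for $K_{U_p\cap\Omega}$ with $A$ independent of $k$; if $a_\infty$ (or $b_\infty$) lay in $\Omega\cap\partial W$ then, since it is at definite Euclidean distance from $\partial\Omega$ while points in the middle of $\sigma_k$ approach $\partial\Omega$, completeness of $K_{U_p\cap\Omega}$ forces $K_{U_p\cap\Omega}$-length of $\sigma_k$ to blow up — but then by the Geodesic Stability Theorem applied to $\Omega\cap U_p$ (Theorem, Gromov hyperbolicity $\Leftrightarrow$ geodesic stability) the $K_{U_p\cap\Omega}$-geodesic joining $a_k$ to $b_k$ stays in a bounded $M$-neighborhood of $\sigma_k$, and one gets a bounded-length quasi-geodesic whose endpoints converge to interior points while it nevertheless escapes to $\partial\Omega$, contradicting completeness. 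Hence $a_\infty,b_\infty\in\partial\Omega\cap\overline V$, and $a_\infty\neq b_\infty$ since $a_\infty\in\partial W$, $b_\infty\in\partial V$ and $W\subset\subset V$. Now the visibility hypothesis for the couple $(a_\infty,b_\infty)$ with respect to $K_{U_p\cap\Omega}$ gives disjoint neighborhoods and a compact $K\subset\Omega\cap U_p$ met by every $K_{U_p\cap\Omega}$-geodesic $[a_k,b_k]_{U_p\cap\Omega}$ for large $k$. But again by geodesic stability, $[a_k,b_k]_{U_p\cap\Omega}$ lies within a fixed $M$-neighborhood (for $K_{U_p\cap\Omega}$) of the $(A,0)$-quasi-geodesic $\sigma_k$; since $\sigma_k\to\partial\Omega$ in the Euclidean sense and $K_{U_p\cap\Omega}$ is complete, an $M$-neighborhood of $\sigma_k$ eventually misses the fixed compact set $K$ — a contradiction. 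This proves the lemma.

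The step I expect to be the main obstacle is the first one: cleanly isolating the sub-arc $\sigma_k$ of $[x_k,y_k]_\Omega$ that both (i) lies in the shell $\overline V\setminus W\subset U_p$, so that Lemma~\ref{lem-loc0} applies with a uniform constant $A$, and (ii) still carries the ``escaping to $\partial\Omega$'' behaviour, i.e. $\sup_{\sigma_k}d_{\mathrm{Eucl}}(\cdot,\partial\Omega)\to 0$. The subtlety is that the geodesic could dip close to $\partial\Omega$ either near $p$ (inside $V$) or far away (near $y_k$), and only the former is useful; but since $x_k\in W$ and the geodesic must reach $y_k\notin\overline V$, it must cross the shell, and I will argue that if the escape happened outside $V$ one could still find within $V$ a crossing sub-arc on which the Euclidean distance is small — or, more robustly, one repeats the argument on each maximal sub-arc in $\overline V\setminus W$ and uses that at least one of a bounded number of them must contain a point with Euclidean distance $<\epsilon_k\to 0$, passing to a subsequence so the same one always works. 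Making this selection uniform (bounded number of crossings, or a direct argument via the first crossing) while retaining the Euclidean-escape property is the technical heart; everything after that is a standard quasi-geodesic/geodesic-stability argument together with completeness of $K_{U_p\cap\Omega}$.
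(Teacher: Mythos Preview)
Your approach is essentially the paper's: argue by contradiction, restrict the $K_\Omega$-geodesic to a portion contained in $V$, observe via Lemma~\ref{lem-loc0} that it is an $(A,0)$-quasi-geodesic for $K_{\Omega\cap U_p}$, use geodesic stability in $\Omega\cap U_p$ to compare it with the corresponding $K_{\Omega\cap U_p}$-geodesic between the same endpoints, and then combine visibility with completeness of $K_{\Omega\cap U_p}$ to reach a contradiction. You overcomplicate one point, however: since the contradiction hypothesis is that the \emph{supremum} of $d_{\mathrm{Eucl}}(\cdot,\partial\Omega)$ over $[x_k,y_k]_\Omega$ tends to $0$, \emph{every} point of the geodesic (hence every sub-arc, and in particular your endpoints $a_k,b_k$) automatically tends to $\partial\Omega$---so your anticipated ``main obstacle'' of isolating a sub-arc carrying the escape behaviour, and your detour to argue that $a_\infty,b_\infty\in\partial\Omega$, are both immediate, and the paper accordingly proceeds directly with the arc $[x_\nu,y'_\nu]_\Omega$ from $x_\nu$ up to the first exit from $V$.
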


\begin{proof} Arguing by contradiction, we assume that there exist a sequence $\{x_{\nu} \}\subset \Omega \cap W$ and $\{y_{\nu}\} \subset \Omega \setminus\overline{V}$ such that
\begin{equation}\label{eq-boun}
\lim_{\nu \to \infty}\sup_{z \in [x_{\nu},y_{\nu}]_{\Omega}}d_{Eucl}(z,\partial \Omega) = 0.
\end{equation}

Let $y_\nu'\in [x_{\nu},y_{\nu}]_\Omega$ be such that $y'_{\nu}\in V\setminus\overline{W}$ and $[x_{\nu},y'_{\nu}]_\Omega\subset V$. Let $A>0$ be the constant associated to $V, U_p$ and given by  Lemma~\ref{lem-loc0}. Then,  $[x_{\nu},y'_{\nu}]_{\Omega}$ is a $(A,0)$-quasi-geodesic segment for $K_{\Omega \cap U_p}$, for every $\nu$. Since $(\Omega \cap U_p, K_{\Omega \cap U_p})$ is Gromov hyperbolic, by Remark~\ref{Rem:converse-inf} there exists $M > 0$ such that for every $\nu$
\begin{equation}\label{eq-close}
[x_{\nu},y'_{\nu}]_{\Omega \cap U_p} \subset \mathcal N^{K_{\Omega \cap U_p}}_M\left([x_{\nu},y'_{\nu}]_{\Omega}\right).
\end{equation}

Bearing in mind that $\Omega \cap U_p$ satisfies the visibility condition on $\partial\Omega\cap V$, there exists a compact set $K$ of $U_p\cap \Omega$ such that for every $\nu$ we can find  $z_\nu\in [x_{\nu},y'_{\nu}]_{\Omega \cap U_p}$ with $z_\nu\in K$.

By \eqref{eq-close}, for every $\nu$, there exists $z_\nu'\in [x_{\nu},y'_{\nu}]_{\Omega}$ such that $K_{\Omega\cap U_p}(z_\nu',z_\nu)\leq M$.
Since $\Omega\cap U_p$ is complete hyperbolic and $\{z_\nu\}\subset K$, it follows that $\{z'_\nu\}$ is relatively compact in $\Omega\cap U_p$---and hence in $\Omega$, contradicting \eqref{eq-boun}.
\end{proof}

\begin{lemma}\label{lem-loc}
Let $\Omega \subset \subset \mathbb C^d$ be a complete hyperbolic domain  and let $p \in \partial \Omega$.  Let $U_p$ be an open neighborhood of $p$ such that $(U_p\cap\Omega, K_{U_p\cap\Omega})$ is complete hyperbolic and Gromov hyperbolic.  Let $V\subset\subset  U_p$ be an open neighborhood of $p$  such that every couple of points $p,q \in \partial  \Omega\cap V$, $p \neq q$, has the visibility condition with respect to $K_{U_p\cap\Omega}$. Let $A>0$ be the constant given by Lemma~\ref{lem-loc0} associated to $U_p, V$.  Assume $p_{\nu}, q_{\nu} \in \Omega\cap V$ are such that $\lim_{\nu \rightarrow \infty}p_{\nu}=\lim_{\nu \rightarrow \infty}q_{\nu} = p$. If $[p_{\nu},q_{\nu}]_{\Omega} \not\subset V$ for every $\nu$, then there exist $C>0$ and $p'_{\nu}, q'_{\nu} \in [p_{\nu},q_{\nu}]_{\Omega} \cap V$ such that, for every $\nu$,
\begin{enumerate}
\item $[p_{\nu},p'_{\nu}]_{\Omega} \subset V$ and  $[q_{\nu},q'_{\nu}]_{\Omega} \subset V$,
\item $d_{Eucl}(p_\nu',\partial \Omega) > C$ and $d_{Eucl}(q'_\nu,\partial \Omega) > C$,
\item $\cup_\nu [p'_{\nu},q'_{\nu}]_{\Omega\cap U_p} $ is relatively compact in $\Omega\cap U_p$.
\item $\cup_\nu [p'_{\nu},q'_{\nu}]_{\Omega} $ is relatively compact in $\Omega$.
\end{enumerate}
Moreover, there exists $B>0$ such that the curve $[p_{\nu},p'_{\nu}]_{\Omega} \cup [p'_{\nu},q'_{\nu}]_{\Omega\cap U_p}\cup[q_{\nu},q'_{\nu}]_{\Omega} $
is a $(A,B)$-quasi-geodesic for $K_{\Omega\cap U_p}$.
\end{lemma}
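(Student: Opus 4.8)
The plan is to take $p'_\nu,q'_\nu$ to be points of $[p_\nu,q_\nu]_\Omega$ that lie deep inside $\Omega$ and are reached \emph{before} the geodesic leaves $V$, using Lemma~\ref{lem-vis} to guarantee that such deep points occur on the relevant sub‑arcs, and then to verify directly that replacing the middle portion of $[p_\nu,q_\nu]_\Omega$ by a $K_{\Omega\cap U_p}$–geodesic between $p'_\nu$ and $q'_\nu$ produces a quasi‑geodesic for $K_{\Omega\cap U_p}$ with exactly the constant $A$ of Lemma~\ref{lem-loc0}. More precisely, fix open neighbourhoods $W\subset\subset V'\subset\subset V$ of $p$; since $p_\nu,q_\nu\to p$ we may assume $p_\nu,q_\nu\in W$ for all $\nu$ (for the finitely many indices where this fails, $p_\nu,q_\nu$ are fixed points of $\Omega$, so one chooses $p'_\nu,q'_\nu$ close to them inside $V$ and shrinks $C$, enlarges $M,B$ at the end). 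Parametrise $\gamma_\nu:=[p_\nu,q_\nu]_\Omega$ by $K_\Omega$–arc length with $\gamma_\nu(0)=p_\nu$, $\gamma_\nu(\ell_\nu)=q_\nu$. Because $\gamma_\nu\not\subset V$ and $\overline{V'}\subset V$, the first and last parameters $\sigma_\nu\le\rho_\nu$ at which $\gamma_\nu$ meets $\C^d\setminus V$ are interior, $\gamma_\nu([0,\sigma_\nu))\cup\gamma_\nu((\rho_\nu,\ell_\nu])\subset V$, and $\gamma_\nu(\sigma_\nu),\gamma_\nu(\rho_\nu)\in\partial V\cap\Omega\subset\Omega\setminus\overline{V'}$. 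Applying Lemma~\ref{lem-vis} with the neighbourhoods $W\subset\subset V'\subset\subset U_p$ (legitimate because visibility on $\partial\Omega\cap V$ forces it on $\partial\Omega\cap V'$) to the sub‑geodesics $\gamma_\nu|_{[0,\sigma_\nu]}$ and $\gamma_\nu|_{[\rho_\nu,\ell_\nu]}$ yields a uniform $C_0>0$ with $\sup_z d_{Eucl}(z,\partial\Omega)\ge C_0$ along each; as the set of parameters where $d_{Eucl}(\gamma_\nu(\cdot),\partial\Omega)>C_0/2$ is open and non‑empty in the corresponding interval, it meets $[0,\sigma_\nu)$ and $(\rho_\nu,\ell_\nu]$, giving $s_\nu<\sigma_\nu\le\rho_\nu<u_\nu$ with $d_{Eucl}(\gamma_\nu(s_\nu),\partial\Omega)>C_0/2$ and $d_{Eucl}(\gamma_\nu(u_\nu),\partial\Omega)>C_0/2$. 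Setting $C:=C_0/2$, $p'_\nu:=\gamma_\nu(s_\nu)$, $q'_\nu:=\gamma_\nu(u_\nu)$ gives $p'_\nu,q'_\nu\in[p_\nu,q_\nu]_\Omega\cap V$ and items (1) (since $\gamma_\nu([0,s_\nu]),\gamma_\nu([u_\nu,\ell_\nu])\subset V$) and (2).

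For items (3)–(4), the set $L:=\{z\in\overline V:\ d_{Eucl}(z,\partial\Omega)\ge C\}$ is compact and, being at positive Euclidean distance from $\partial\Omega$ and contained in the open set $U_p$, is a compact subset of both $\Omega$ and $\Omega\cap U_p$. Since $(\Omega,K_\Omega)$ and $(\Omega\cap U_p,K_{\Omega\cap U_p})$ are complete, locally compact geodesic (hence proper, by Hopf–Rinow) spaces, $M:=\max\{\diam_{K_\Omega}(L),\diam_{K_{\Omega\cap U_p}}(L)\}<\infty$, and because $p'_\nu,q'_\nu\in L$ this $M$ bounds $K_\Omega(p'_\nu,q'_\nu)$ and $K_{\Omega\cap U_p}(p'_\nu,q'_\nu)$ uniformly. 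Hence $[p'_\nu,q'_\nu]_\Omega$ lies in the closed $M$–ball about $p'_\nu\in L$ for $K_\Omega$, a fixed compact subset of $\Omega$, giving (4); and $[p'_\nu,q'_\nu]_{\Omega\cap U_p}$ lies in a fixed compact subset of $\Omega\cap U_p$, giving (3).

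For the quasi‑geodesic claim, let $A\ge1$ be the constant of Lemma~\ref{lem-loc0} for $V\subset\subset U_p$. By that lemma the outer pieces $[p_\nu,p'_\nu]_\Omega$, $[q'_\nu,q_\nu]_\Omega$, being $K_\Omega$–geodesic segments inside $V$, are $(A,0)$–quasi‑geodesics for $K_{\Omega\cap U_p}$, and more precisely every sub‑arc of them has $K_{\Omega\cap U_p}$–length at most $A$ times its $K_\Omega$–length; the middle piece $[p'_\nu,q'_\nu]_{\Omega\cap U_p}$ is a $K_{\Omega\cap U_p}$–geodesic of length $\le M$. For $\Gamma_\nu:=[p_\nu,p'_\nu]_\Omega\cup[p'_\nu,q'_\nu]_{\Omega\cap U_p}\cup[q'_\nu,q_\nu]_\Omega$ one checks the quasi‑geodesic inequality on a sub‑arc with endpoints $x=\Gamma_\nu(s)$, $y=\Gamma_\nu(t)$ by cases according to which of the three pieces contain $x$ and $y$: the same‑piece cases are immediate, the adjacent‑piece cases follow from the triangle inequality for $K_{\Omega\cap U_p}$ together with $K_{\Omega\cap U_p}\ge K_\Omega$, and the only delicate case is $x$ in the first and $y$ in the third piece, where
\[
l_{K_{\Omega\cap U_p}}(\Gamma_\nu;[s,t])\ \le\ A\,K_\Omega(x,p'_\nu)+M+A\,K_\Omega(q'_\nu,y),
\]
and, since $x,p'_\nu,q'_\nu,y$ occur in this order along the $K_\Omega$–geodesic $\gamma_\nu$, $K_\Omega(x,p'_\nu)+K_\Omega(q'_\nu,y)=K_\Omega(x,y)-K_\Omega(p'_\nu,q'_\nu)\le K_\Omega(x,y)\le K_{\Omega\cap U_p}(x,y)$, so the right‑hand side is $\le A\,K_{\Omega\cap U_p}(x,y)+M$. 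Taking $B:=(A+1)M+1$ covers all cases, so $\Gamma_\nu$ is an $(A,B)$–quasi‑geodesic for $K_{\Omega\cap U_p}$ for every $\nu$.

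The one point requiring care is the last displayed estimate: obtaining the quasi‑geodesic constant to be exactly the $A$ of Lemma~\ref{lem-loc0} (rather than $A^2$) forces one to bound the $K_{\Omega\cap U_p}$–length of the two outer sub‑arcs directly against their $K_\Omega$–length and then invoke the additivity of $K_\Omega$ along $\gamma_\nu$, instead of estimating $K_{\Omega\cap U_p}(x,p'_\nu)$ and $K_{\Omega\cap U_p}(q'_\nu,y)$ separately via the triangle inequality. A minor technical precaution is the insertion of the auxiliary neighbourhood $V'$: without it the exit points $\gamma_\nu(\sigma_\nu),\gamma_\nu(\rho_\nu)$ need not lie in $\Omega\setminus\overline{V'}$, and Lemma~\ref{lem-vis} could not be applied to them.
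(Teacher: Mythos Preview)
Your proof is correct and follows essentially the same strategy as the paper: you locate deep points $p'_\nu,q'_\nu$ on the initial and final $V$-arcs of $[p_\nu,q_\nu]_\Omega$ via Lemma~\ref{lem-vis}, deduce (3)--(4) from completeness, and verify the $(A,B)$-quasi-geodesic property by a case analysis using Lemma~\ref{lem-loc0} on the outer pieces and the uniform bound on the middle piece, exploiting the additivity of $K_\Omega$ along $\gamma_\nu$ in the outer--outer case. Your insertion of the auxiliary neighbourhood $V'$ and the explicit appeal to properness are in fact a bit more careful than the paper's write-up, but the argument is otherwise the same.
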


\begin{proof} Let $W\subset\subset V$ be an open neighborhood of $p$ such that $\{p_\nu\}, \{q_\nu\}\subset W$. Let $a'_{\nu}, b'_{\nu} \in [p_{\nu},q_{\nu}]_{\Omega} \backslash \overline{W}$ be such that $[p_{\nu},a'_{\nu}]_{\Omega} \subset V$ and $[q_{\nu},b'_{\nu}]_{\Omega} \subset V$. By Lemma~\ref{lem-vis} there exist $p'_\nu \in [p_{\nu},a'_{\nu}]_{\Omega}$ and $q_\nu'\in [q_{\nu},b'_{\nu}]_{\Omega}$ such that (2) holds for some $C>0$ independent of $\nu$.

Statements (3) and (4) follow at once  taking into account that, since $(\Omega, K_\Omega)$ is complete and $\{p_\nu', q_\nu'\}\subset\subset\Omega\cap U_p$, then $\cup_\nu [p'_\nu, q'_\nu]_\Omega$ and $\cup_\nu [p'_\nu, q'_\nu]_{\Omega\cap U_p}$ are relatively compact in $\Omega$ and $\Omega\cap U_p$ respectively.

Finally, since $[p_\nu, q_\nu]_\Omega$ is a geodesic for $K_\Omega$ and
\[
[p_\nu, q_\nu]_\Omega=[p_{\nu},p'_{\nu}]_{\Omega} \cup [p'_{\nu},q'_{\nu}]_{\Omega}\cup[q_{\nu},q'_{\nu}]_{\Omega},
\]
it follows by (3) and (4) that there exists $B>0$ such that
\[
l_{\Omega\cap U_p}([p'_{\nu},q_\nu']_{\Omega\cap U_p})=K_{\Omega\cap U_p}(p'_{\nu},q_\nu') \leq B \leq AK_{\Omega}(p'_{\nu},q_\nu')+B=Al_{\Omega}([p'_{\nu},q'_{\nu}]_{\Omega})+B,
\]
and such that for every $\xi\in [p_{\nu},p'_{\nu}]_{\Omega}$  and $\zeta\in [p'_{\nu},q'_\nu]_{\Omega\cap U_p}$,
\[
|K_{\Omega\cap U_p}(\xi, p'_\nu)-K_{\Omega\cap U_p}(\xi, \zeta)|\leq K_{\Omega\cap U_p}(p'_\nu, \zeta)\leq K_{\Omega\cap U_p}(p'_\nu, q_\nu')\leq \frac{B}{A+1}.
\]
Using Lemma~\ref{lem-loc0} for $[p_{\nu},p'_{\nu}]_{\Omega}$ and $[q_{\nu},q'_{\nu}]_{\Omega}$ and the previous inequalities, it is easy to show that $[p_{\nu},p'_{\nu}]_{\Omega} \cup [p'_{\nu},q'_{\nu}]_{\Omega\cap U_p}\cup[q_{\nu},q'_{\nu}]_{\Omega} $ is a $(A,B)$-quasi-geodesics for $K_{\Omega\cap U_p}$.
Indeed,  if $\xi\in [p_{\nu},p'_{\nu}]_{\Omega}$  and $\zeta\in  [p'_{\nu},q'_{\nu}]_{\Omega\cap U_p}$, let us denote by $l_{\Omega\cap U_p}([\xi, \zeta])$ the length of the curve $[\xi,p'_{\nu}]_{\Omega} \cup [p'_{\nu},\zeta]_{\Omega\cap U_p}$ with respect to $K_{\Omega\cap U_p}$, and similarly denote by  $l_{\Omega}([\xi, \zeta])$ its length with respect to $K_\Omega$. Hence, 
\begin{equation*}
\begin{split}
l_{\Omega\cap U_p}([\xi, \zeta])
&=l_{\Omega\cap U_p}([\xi, p'_\nu]_{\Omega})
+l_{\Omega\cap U_p}([p'_\nu, \zeta]_{\Omega\cap U_p})\\
&\leq A K_{\Omega\cap U_p}(\xi, p'_\nu)+K_{\Omega\cap U_p}(p'_\nu, \zeta)\leq A K_{\Omega\cap U_p}(\xi,\zeta)+(A+1)\frac{B}{A+1}\\
&= A K_{\Omega\cap U_p}(\xi,\zeta)+B.
\end{split}
\end{equation*}
 If $\xi\in [p_{\nu},p'_{\nu}]_{\Omega}$  and $\zeta\in  [q_{\nu},q'_{\nu}]_{\Omega}$, we have
\begin{equation*}
\begin{split}
l_{\Omega\cap U_p}([\xi, \zeta])
&=l_{\Omega\cap U_p}([\xi, p'_\nu]_{\Omega})+l_{\Omega\cap U_p}([p'_\nu, q_\nu']_{\Omega\cap U_p})+l_{\Omega\cap U_p}([q_\nu', \zeta]_{\Omega})\\
&\leq A l_{\Omega}([\xi, p'_\nu]_{\Omega})+Al_{\Omega}([p'_\nu, q_\nu']_{\Omega})+B+Al_{\Omega}([q_\nu', \zeta]_{\Omega})\\
&=A K_{\Omega}(\xi,\zeta)+B\leq A K_{\Omega\cap U_p}(\xi,\zeta)+B,
\end{split}
\end{equation*}
and we are done.
\end{proof}

\begin{lemma}\label{loc-glob-complete}
Let $\Omega \subset \subset \mathbb C^d$ be a  domain. If $\Omega$ is locally  visible then $(\Omega, K_\Omega)$ is complete hyperbolic.
\end{lemma}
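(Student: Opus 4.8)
The plan is to show that Kobayashi balls in $\Omega$ are relatively compact, which by the Hopf--Rinow theorem for Kobayashi metrics (together with the fact that bounded domains are Kobayashi hyperbolic) yields completeness. Equivalently, I would show that if $\{z_\nu\}\subset\Omega$ is a sequence with $z_\nu\to q\in\partial\Omega$, then $K_\Omega(z_0,z_\nu)\to\infty$ for a fixed base point $z_0\in\Omega$; completeness then follows since a Cauchy sequence cannot escape to the boundary, and any bounded-in-$K_\Omega$ sequence stays in a compact subset of $\Omega$. Fix such a sequence; after passing to a subsequence assume $z_\nu\to q\in\partial\Omega$ and fix $z_0\in\Omega$. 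The point $q$ has a local-visibility neighborhood: there is $V_q$ with $\Omega\cap V_q$ connected and complete hyperbolic, and $V_q'\subseteq V_q$ such that every pair of distinct boundary points in $\partial\Omega\cap V_q'$ has the visibility property with respect to geodesics for $K_{\Omega\cap V_q}$.

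First I would reduce to the situation of Lemma \ref{lem-vis}: by local visibility at $q$ there is an open neighborhood $U_q := V_q$ with $(U_q\cap\Omega,K_{U_q\cap\Omega})$ complete hyperbolic, and one can choose $W\subset\subset V\subset\subset V_q'$ small neighborhoods of $q$ with $z_0\notin\overline{V}$ and $z_\nu\in W$ for large $\nu$. (A small subtlety: Lemma \ref{lem-vis} is stated assuming $(U_p\cap\Omega,K_{U_p\cap\Omega})$ is Gromov hyperbolic and that visibility holds with respect to $K_{U_p\cap\Omega}$; in general local visibility only gives visibility with respect to $K_{\Omega\cap V_q}$, but since here we are only asked for completeness of $K_\Omega$, I would instead work directly with $\Omega\cap V_q$ in place of $U_p\cap\Omega$, or invoke that the argument of Lemma \ref{lem-vis} goes through verbatim using the metric $K_{\Omega\cap V_q}$ in which visibility is assumed.) Applying this, there is $C>0$ such that for all large $\nu$,
\[
\sup_{z\in[z_0,z_\nu]_\Omega} d_{\hbox{\tiny Eucl}}(z,\partial\Omega)\geq C.
\]
Thus each geodesic $[z_0,z_\nu]_\Omega$ contains a point $w_\nu$ with $d_{\hbox{\tiny Eucl}}(w_\nu,\partial\Omega)\geq C$; since $\Omega$ is bounded, $\{w_\nu\}$ lies in a fixed compact subset $L$ of $\Omega$.

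Now I would use this to force $K_\Omega(z_0,z_\nu)\to\infty$. Write $K_\Omega(z_0,z_\nu)=K_\Omega(z_0,w_\nu)+K_\Omega(w_\nu,z_\nu)$ since $w_\nu$ lies on the geodesic. The first term is bounded below by $0$ and above by $\operatorname{diam}_{K_\Omega}(\{z_0\}\cup L)<\infty$, so it suffices to show $K_\Omega(w_\nu,z_\nu)\to\infty$. This follows from the standard lower bound for the Kobayashi distance near the boundary: since $\Omega\subset\subset\C^d$, there is a constant $c>0$ with $K_\Omega(a,b)\geq \tfrac12\bigl(\log\operatorname{dist}(a,\partial\Omega)-\log\operatorname{dist}(b,\partial\Omega)\bigr) + c$ type estimate, or more simply, comparing with a ball $\mathbb B$ containing $\Omega$ of radius $R$ gives $K_\Omega(w_\nu,z_\nu)\geq K_{\mathbb B(R)}(w_\nu,z_\nu)\to\infty$ because $w_\nu$ stays in the compact set $L$ while $z_\nu\to q\in\partial\Omega$ approaches the boundary of the larger ball (after enlarging $R$ so that $q\notin\partial\mathbb B(R)$... actually one must be slightly careful, but the estimate $K_\Omega(a,b)\ge \log\frac{1}{d_{\text{Eucl}}(b,\partial\Omega)} - C_L$ for $a\in L$ is classical and suffices). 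Hence $K_\Omega(z_0,z_\nu)\to\infty$, so no $K_\Omega$-bounded sequence accumulates on $\partial\Omega$, i.e. closed $K_\Omega$-balls are compact, and $(\Omega,K_\Omega)$ is complete hyperbolic.

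\textbf{Main obstacle.} The crux is the application of Lemma \ref{lem-vis}, and in particular correctly matching the hypotheses: Lemma \ref{lem-vis} presupposes Gromov hyperbolicity of the local piece and visibility with respect to its Kobayashi metric, while the definition of local visibility gives visibility with respect to $K_{\Omega\cap V_q}$ (not necessarily with Gromov hyperbolicity yet available, since that is what the main theorem is trying to establish). I expect the resolution is that for the purpose of merely producing the Euclidean-distance lower bound \eqref{eq-boun}-type estimate one does not need the full Geodesic Stability machinery: one can re-run the proof of Lemma \ref{lem-vis} using only completeness of $K_{\Omega\cap V_q}$ and the visibility of pairs in $\partial\Omega\cap V_q'$, which is exactly the content of local visibility. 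Verifying that this weaker argument indeed yields the needed uniform constant $C$ — handling the geodesics $[z_0,z_\nu]_\Omega$ that may exit $V_q$, splitting them at $\partial W$ and $\partial V$ as in Lemma \ref{lem-loc}, and applying visibility to the relatively compact pieces — is the technical heart; the final boundary-distance estimate for $K_\Omega$ is routine.
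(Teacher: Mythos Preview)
Your proposal has two genuine gaps, and they are both forms of circularity.

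\medskip
\textbf{Gap 1: geodesics in $\Omega$ do not yet exist.} You repeatedly invoke geodesics $[z_0,z_\nu]_\Omega$ and then apply Lemma~\ref{lem-vis}. But Lemma~\ref{lem-vis} is stated for a \emph{complete hyperbolic} domain $\Omega$, and more basically, the existence of $[z_0,z_\nu]_\Omega$ requires $(\Omega,K_\Omega)$ to be geodesic, which follows from Hopf--Rinow only once completeness is known. Since completeness of $(\Omega,K_\Omega)$ is precisely what you are proving, this is circular. The paper handles this by working not with geodesics but with smooth curves $\gamma_k$ that $\epsilon$-almost realise $K_\Omega(z_0,z_k)$; these always exist because $K_\Omega$ is the inner distance associated to $k_\Omega$.

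\medskip
\textbf{Gap 2: the ``classical'' boundary lower bound is false in general.} Your final step claims that for $w_\nu$ in a fixed compactum $L\subset\Omega$ and $z_\nu\to q\in\partial\Omega$ one has $K_\Omega(w_\nu,z_\nu)\to\infty$, via an estimate of the type $K_\Omega(a,b)\ge -\log d_{\mathrm{Eucl}}(b,\partial\Omega)-C_L$. No such estimate holds for an arbitrary bounded domain: its validity at every boundary point is essentially equivalent to completeness (think of $\D\setminus\{0\}$, or any bounded domain that is not complete hyperbolic). The comparison with a large ball $\mathbb B(R)\supset\Omega$ gives nothing, since $q$ is an interior point of $\mathbb B(R)$ and $K_{\mathbb B(R)}(w_\nu,z_\nu)$ stays bounded.

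\medskip
\textbf{What the paper does instead.} The key idea you are missing is that the only completeness available is that of the \emph{local} metric $K_{\Omega\cap V}$, so the contradiction must be produced there, not in $K_\Omega$. The paper takes the almost-minimizing curves $\gamma_k$ from $z_0$ to $z_k$ (with $z_0$ chosen in $W\subset\subset V'$ as well), and uses Lemma~\ref{lem-loc0} to bound the $K_{\Omega\cap V}$-length of any portion of $\gamma_k$ lying in $W$ by $A$ times its $K_\Omega$-length. One then splits $\gamma_k$ at its first exit and last entry through $\partial W$; the two end arcs have bounded $K_{\Omega\cap V}$-length, and either the exit/entry points stay in a compactum (giving a bounded $K_{\Omega\cap V}$-geodesic bridge), or one of them converges to some $q'\in\partial\Omega\cap\partial W$ with $q'\neq p$, at which point the \emph{local} visibility of the pair $(p,q')$ in $(\Omega\cap V,K_{\Omega\cap V})$ forces a $K_{\Omega\cap V}$-geodesic of bounded length to pass through a fixed compactum while its endpoints escape --- contradicting completeness of $K_{\Omega\cap V}$. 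No Gromov hyperbolicity, no global geodesics, and no lower bound on $K_\Omega$ near $\partial\Omega$ are needed.
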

\begin{proof}
We need to show that if $\{z_k\}\subset \Omega$ is a sequence such that $K_\Omega(z_0, z_k)\leq C$ for all $k$ and for some $C>0$ then $\{z_k\}$ is relatively compact in $\Omega$. Suppose this is not the case and assume that $\{z_k\}$ converges to $p\in \partial \Omega$. Let $V$ be an open neighborhood of  $p$ such that $(\Omega\cap V, K_{\Omega\cap V})$ is complete hyperbolic and let $V'\subseteq V$ be  an open neighborhood of $p$ such that every couple of distinct points of $\partial \Omega\cap V'$ satisfies the visibility condition with respect to $K_{\Omega\cap V}$.  Let $W\subset\subset V'$ be an open neighborhood of $p$.

We can  assume that $\{z_k\}\subset W$. Fix $\epsilon>0$. Let $\gamma_k:[0,1]\to \Omega$ be a smooth curve  such that $\gamma_k(0)=z_0$, $\gamma_k(1)=z_k$ and
\[
l_{K_{\Omega}}(\gamma_k;[0,1])\leq K_\Omega(z_0, z_k)+\epsilon\leq C+\epsilon.
\]
If $\gamma_k([0,1])\subset W$, then by Lemma~\ref{lem-loc0} there exists $A>0$ such that
\[
K_{\Omega\cap V}(z_0,z_k)\leq l_{K_{\Omega\cap V}}(\gamma_k;[0,1])\leq A l_{K_{\Omega}}(\gamma_k;[0,1])\leq A(C+\epsilon),
\]
and, since $K_{\Omega\cap V}$ is complete, we have a contradiction.

Therefore, we can find $0<t_k^0<t_k^1<1$ such that $\gamma_k(t)\in W$ for all $t\in [0,t_k^0)\cup (t_k^1,1]$ and $\gamma_k(t_k^0), \gamma_k(t_k^1)\in \partial W\cap \Omega$ for all $k$.

If $\{\gamma_k(t_k^0)\}\cup\{\gamma_k(t_k^1)\}$ is relatively compact in $\Omega$ (and hence by construction in $V\cap \Omega$), it follows that $K_{\Omega\cap V}(\gamma_k(t_k^0),\gamma_k(t_k^1))\leq T$ for some fixed $T>0$ and for all $k$. Thus, arguing as before, we have
\begin{equation*}
\begin{split}
K_{\Omega\cap V}(z_0,z_k)&\leq K_{\Omega\cap V}(z_0,\gamma_k(t_k^0))+
K_{\Omega\cap V}(\gamma_k(t_k^0),\gamma_k(t_k^1))+K_{\Omega\cap V}(\gamma_k(t_k^1), z_k)\\&\leq  l_{K_{\Omega\cap V}}(\gamma_k;[0,t_k^0])+l_{K_{\Omega\cap V}}(\gamma_k;[t_k^1,1])+T\\&\leq Al_{K_{\Omega}}(\gamma_k;[0,t_k^0])+Al_{K_{\Omega}}(\gamma_k;[t_k^1,1])+T\\& \leq 
Al_{K_{\Omega}}(\gamma_k;[0,1])+T\leq A(C+\epsilon)+T,
\end{split}
\end{equation*}
again, a contradiction.

Hence, $\{\gamma_k(t_k^0)\}\cup\{\gamma_k(t_k^1)\}$ is not relatively compact in $\Omega$, and we can assume first that $\{\gamma_k(t_k^0)\}$ converges to some $q\in \partial\Omega\cap \partial W$. In particular, $q\neq p$. Thus, as before,
\[
K_{\Omega\cap V}(z_0, \gamma_k(t_k^0))\leq l_{K_{\Omega\cap V}}(\gamma_k;[0,t_k^0])\leq A l_{K_{\Omega}}(\gamma_k;[0,1])\leq A(C+\epsilon),
\]
again a contradiction.

Finally, we are left to consider the case $\{\gamma_k(t_k^1)\}$ converges to some $q\in \partial\Omega\cap \partial W$. Arguing as before, we see that
\[
K_{\Omega\cap V}(z_k, \gamma_k(t_k^1))\leq A(C+\epsilon).
\]
Now, by visibility condition, for every $k$ there exists  $x_k\in [z_k, \gamma_k(t_k^1)]_{\Omega\cap V}$ such that $\{x_k\}$ is relatively compact in $V\cap \Omega$, but since
\[
K_{\Omega\cap V}(z_k, x_k)+K_{\Omega\cap V}(x_k, \gamma_k(t_k^1))=K_{\Omega\cap V}(z_k, \gamma_k(t_k^1))\leq A(C+\epsilon),
\]
we have again a contradiction.
\end{proof}

We have

\begin{corollary}\label{loc-visibility}
Let $\Omega \subset \subset \mathbb C^d$ be a domain which is locally Gromov hyperbolic and locally visible.  Then $(\Omega, K_\Omega)$ is complete hyperbolic and $\Omega$ is visible.
\end{corollary}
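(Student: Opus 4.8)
The plan is to combine the preceding lemmas to deduce both conclusions. Completeness of $(\Omega, K_\Omega)$ is immediate from Lemma~\ref{loc-glob-complete}, since local Gromov hyperbolicity and local visibility in particular give local visibility. So the real content is showing that $\Omega$ is visible: for every pair $p \neq q$ in $\partial\Omega$, one must produce neighborhoods $V_p \ni p$, $V_q \ni q$ with $V_p \cap V_q = \emptyset$ and a compact $K \subset \Omega$ meeting every geodesic $[x,y]_\Omega$ with $x \in V_p$, $y \in V_q$. The natural strategy is to argue by contradiction: suppose there are sequences $x_\nu \to p$, $y_\nu \to q$ with $[x_\nu, y_\nu]_\Omega$ escaping every compact subset of $\Omega$, i.e. $\sup_{z \in [x_\nu,y_\nu]_\Omega} d_{\mathrm{Eucl}}(z, \partial\Omega) \to 0$, and derive a contradiction using local visibility near $p$.

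First I would fix, using local Gromov hyperbolicity at $p$, an open neighborhood $U_p$ with $(\Omega \cap U_p, K_{\Omega \cap U_p})$ complete hyperbolic and Gromov hyperbolic; by Lemma~\ref{Lem:equiv-vis}, $\Omega \cap U_p$ is also locally visible at $p$, so after shrinking we get nested neighborhoods $W \subset\subset V \subset\subset U_p$ of $p$ such that every pair of distinct points of $\partial\Omega \cap V$ has the visibility condition with respect to $K_{\Omega \cap U_p}$. Since $q \neq p$, after passing to a subsequence we may assume $x_\nu \in \Omega \cap W$ while $y_\nu$ eventually lies outside $\overline{V}$ (as $y_\nu \to q \notin \overline{V}$ once $V$ is taken small enough). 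Now Lemma~\ref{lem-vis} applies verbatim: it yields a constant $C > 0$ with $\sup_{z \in [x_\nu, y_\nu]_\Omega} d_{\mathrm{Eucl}}(z, \partial\Omega) \geq C$ for all $\nu$, directly contradicting the assumption that this supremum tends to $0$. This shows the geodesics $[x_\nu, y_\nu]_\Omega$ cannot degenerate toward $\partial\Omega$, hence — since $\Omega$ is bounded and, by the first part, $K_\Omega$ is complete so that Euclidean-relative-compactness in $\Omega$ is equivalent to $K_\Omega$-boundedness away from the boundary — there is a compact $K \subset \Omega$ meeting every $[x_\nu, y_\nu]_\Omega$; running the contradiction argument for arbitrary such sequences gives the visibility condition for the pair $(p,q)$, with $V_p, V_q$ chosen disjoint and small enough for the above.

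The main obstacle is bookkeeping rather than a genuine new idea: one must be careful that the neighborhood $U_p$ furnished by local Gromov hyperbolicity, the neighborhood where local visibility holds, and the shrunk neighborhoods $W \subset\subset V \subset\subset U_p$ needed to invoke Lemmas~\ref{lem-loc0}, \ref{lem-vis}, \ref{lem-loc} are chosen compatibly (this compatibility is precisely what Lemma~\ref{Lem:equiv-vis} buys us), and that the geodesics being localized are geodesics for $K_\Omega$, which become $(A,B)$-quasi-geodesics for $K_{\Omega \cap U_p}$ only on the portion lying inside $V$ — this is why Lemma~\ref{lem-loc}'s decomposition and Lemma~\ref{lem-vis} are stated the way they are. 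Once the sequences have been arranged to satisfy the hypotheses of Lemma~\ref{lem-vis}, the contradiction is essentially immediate, and Remark~\ref{Rem:converse-inf} (quasi-geodesic stability) is what underlies Lemma~\ref{lem-vis} in the first place. Finally, $\Omega$ being bounded guarantees that "staying a definite Euclidean distance from $\partial\Omega$" genuinely produces a compact subset of $\Omega$, closing the argument.
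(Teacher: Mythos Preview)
Your proposal is correct and follows essentially the same route as the paper, which simply declares the corollary a direct consequence of Lemma~\ref{Lem:equiv-vis}, Lemma~\ref{lem-loc}, Lemma~\ref{loc-glob-complete} and Remark~\ref{Rem:converse-inf} without spelling out the argument. Your write-up makes explicit how these pieces fit together: completeness from Lemma~\ref{loc-glob-complete}, then Lemma~\ref{Lem:equiv-vis} to align the Gromov-hyperbolic neighborhood $U_p$ with the visibility neighborhood, and finally a contradiction via Lemma~\ref{lem-vis}. The only cosmetic difference is that you invoke Lemma~\ref{lem-vis} directly (which is indeed the sharper tool here, since $p\neq q$), whereas the paper's citation list names Lemma~\ref{lem-loc}; but Lemma~\ref{lem-loc} is built on Lemma~\ref{lem-vis}, so this is not a genuine divergence.
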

\begin{proof}
By Lemma~\ref{loc-glob-complete}, $(\Omega, K_\Omega)$ is complete hyperbolic.

In order to prove that $\Omega$ is visible, let  $p, q\in \partial \Omega$, $p\neq q$. Let $U_p$ be an open neighborhood of $p$ such that $U_p\cap\Omega$ is a complete hyperbolic domain and $(U_p\cap\Omega, K_{U_p\cap\Omega})$ is Gromov hyperbolic.  By Lemma~\ref{lem-vis} there exists an open  neighborhood $V\subset\subset U_p$  of $p$ such that every couple of points $q_1,q_2\in V\cap \partial \Omega$, $q_1\neq q_2$, satisfies the visibility condition with respect to $K_{U_p\cap\Omega}$. 

Since $q\neq p$, we can choose $V$ in such a way that $q\not\in \overline{V}$. Let $W\subset\subset V$ be an open neighborhood of $p$, let $C>0$ be the constant given by Lemma~\ref{lem-vis} and let $K:=\{\zeta\in\Omega  : d_{Eucl}(\zeta,\partial\Omega)\geq C/2\}$. Let $Q$ be an open neighborhood of $q$ such that $Q\cap \overline{V}=\emptyset$. Hence,  by Lemma~\ref{lem-vis},   if $x\in \Omega\cap W$ and $y\in \Omega\cap Q$ then $[x,y]_\Omega\cap K\neq\emptyset$, and therefore $p,q$ satisfies the visibility condition. 
\end{proof}

As we stated in the introduction, whenever a domain is Gromov hyperbolic and visible, the visibility condition holds locally:

\begin{proposition}\label{Prop:loc-vis-forever}
Let $\Omega\subset\subset \C^d$ be a bounded domain. Suppose that $(\Omega, K_\Omega)$ is complete hyperbolic and Gromov hyperbolic and that $\Omega$ is visible. Let $p\in \partial\Omega$ and let $W$ be an open neighborhood  of $p$ such that $\Omega\cap W$ is connected and complete hyperbolic\footnote{This last property holds if, for example, $W$ is complete hyperbolic.}.
 Then every couple of distinct points in $\partial\Omega\cap W$ satisfies the visibility condition with respect to $K_{\Omega\cap W}$.
\end{proposition}

\begin{proof}
We argue by contradiction.  Suppose $q^1 \neq q^2 \in W\cap \partial \Omega$ and
there are sequences $q^1_k\to q^1$, $q^2_k\to q^2$
such that for any compactum $K \subset \Omega\cap W$,
 the $K_{\Omega\cap W}$-geodesic
$[q^1_k,q^2_k]_{{\Omega\cap W}}$ avoids $K$ for $k$ large enough.

Let $V$ be a neighborhood of $q^1$ such that $V \subset \subset W$, and $q^2 \notin V$.
We may assume that $q^1_k \in V$ and $q^2_k\not\in V$ for all $k$. There exists $q'_k \in \partial V \cap [q^1_k,q^2_k]_{{\Omega\cap W}}$
such that $[q^1_k,q'_k]_{{\Omega\cap W}} \setminus \{q'_k\} \subset V$. By our assumption, passing to a subsequence, we
may assume that $q'_k\to q' \in \partial V \cap \partial (\Omega\cap W)$. But $\partial V \cap \partial W = \emptyset$,
so in fact $q' \in \partial V \cap \partial \Omega$.

By Lemma~\ref{lem-loc0}, $[q^1_k,q'_k]_{{\Omega\cap W}}$ is an $(A,0)$-quasi-geodesic for $K_\Omega$.
By the Geodesic Stability Theorem,
 $[q^1_k,q'_k]_{\Omega} \subset \mathcal N_M ([q^1_k,q'_k]_{{\Omega\cap W}})$ for some $M>0$,
independent of $k$. By the
visibility condition for $(\Omega, K_\Omega)$, there is a compactum $L\subset \Omega$ such that $L\cap [q^1_k,q'_k]_{\Omega}\neq \emptyset$.
Since $\Omega$ is complete hyperbolic, $\mathcal N_M (L)$ is relatively compact in $\Omega$, and
$L':=\overline{\mathcal N_M (L)\cap V}$ is a compactum in $W\cap \Omega$. But $L'\cap [q^1_k,q'_k]_{{\Omega\cap W}}\neq \emptyset$
for all $k$, which contradicts our assumption.
\end{proof}

\section{Proof of the main results}\label{S2}

In this section we give the proofs of Theorem~\ref{thm-locglob}, Theorem~\ref{thm-locglob2} and Corollary~\ref{Cor:localmodel-global}.

\subsection{Proof of Theorem~\ref{thm-locglob}}

\begin{proof}[Proof of Theorem~\ref{thm-locglob}] Since (1) implies (2) is trivial, we prove that (2) implies (1).

By Corollary~\ref{loc-visibility}, $\Omega$ is complete hyperbolic and visible. Thus we have to show that $(\Omega, K_\Omega)$ is Gromov hyperbolic.

By Lemma~\ref{Lem:equiv-vis}, every $p\in\partial \Omega$ has  open neighborhoods $U'_p\subseteq U_p$ such that $(\Omega\cap U_p, K_{\Omega\cap U_p})$ is complete hyperbolic and Gromov hyperbolic and every couple of distinct points in $\partial\Omega\cap U'_p$ satisfies the visibility condition with respect to $K_{\Omega\cap U_p}$.

We assume, to get a contradiction, that for every integer $\nu \geq 1$, there exist $x_{\nu},\ y_{\nu},\ z_{\nu} \in \Omega$ and $a_{\nu} \in [x_{\nu},y_{\nu}]_{\Omega}$ such that for every $\nu$
\begin{equation}\label{eq-abs}
K_{\Omega}(a_{\nu},[x_{\nu},z_{\nu}]_{\Omega} \cup [y_{\nu},z_{\nu}]_{\Omega}) \geq \nu.
\end{equation}

Up to extracting a subsequence, we may assume that there exist points $x_{\infty}$, $y_{\infty}$, $z_{\infty}$ and $a_{\infty} \in \overline{\Omega}$ such that
$$
\lim_{\nu \rightarrow \infty}x_{\nu} = x_{\infty},\ \lim_{\nu \rightarrow \infty}y_{\nu} = y_{\infty},\ \lim_{\nu \rightarrow \infty}z_{\nu} = z_{\infty},\ \lim_{\nu \rightarrow \infty}a_{\nu} = a_{\infty}.
$$
We will consider different cases, depending on the respective locations of the points $x_{\infty}, \ y_{\infty}, \ z_{\infty}, \ a_{\infty}$.

Either $x_\infty$ or $y_\infty$ (or both) belongs to $\partial \Omega$. Indeed, if $x_{\infty} \in \Omega$ and $y_{\infty} \in \Omega$, then there is a compact subset $K$ of $\Omega$ such that $[x_{\nu},y_{\nu}]_{\Omega} \subset K$ for every $\nu \geq 1$. It follows that $\sup_{\nu \geq 1}K_{\Omega}(a_{\nu},\{x_{\nu},y_{\nu}\}) < \infty$. This contradicts (\ref{eq-abs}). We may then assume that $x_{\infty} \in \partial \Omega$.

\vspace{1mm}
\noindent{\bf Case I. $a_{\infty} \in \Omega,\ y_{\infty} \in \Omega$.} Then, by the same argument as above, we have: $\sup_{\nu \geq 1}K_{\Omega}(a_{\nu},y_{\nu}) < \infty$. This contradicts (\ref{eq-abs}).

\vspace{2mm}
\noindent{\bf Case II. $x_{\infty} \neq y_{\infty}, a_{\infty} \neq x_{\infty}, a_{\infty} \neq y_{\infty}$.}

\vspace{1mm}
\noindent{\bf Subcase II.1. $y_{\infty} \in \Omega$.} From Case I, we know that $a_\infty \in \partial \Omega$. According to Lemma~\ref{lem-vis}, there exists $C>0$ and, for every $\nu \geq 1$ there is $a'_{\nu} \in [x_{\nu},a_{\nu}]_{\Omega}$ such that
$$
\inf_{\nu >>1} d_{Eucl}(a'_{\nu},\partial \Omega) \geq C.
$$
In particular there is a compact subset $K'$ of $\Omega$ such that for every $\nu \geq 1$, $a_{\nu} \in K'$, $y_{\nu} \in K'$. It follows
$$
\sup_{\nu \geq 1}K_{\Omega}(a_{\nu},y_{\nu}) \leq \sup_{\nu \geq 1}K_{\Omega}(a'_{\nu},y_{\nu}) < \infty,
$$
which contradicts  (\ref{eq-abs}).

\vspace{1mm}
\noindent{\bf Subcase II.2. $y_{\infty} \in \partial \Omega$.} We may assume that $z_{\infty} \neq x_{\infty}$ (otherwise, if $z_\infty=x_\infty\neq y_\infty$ we repeat the argument switching $x_\infty$ with $y_\infty$). It follows from Lemma~\ref{lem-vis} that there exists $C > 0$ and, for every $\nu \geq 1$, there exists $z'_{\nu} \in [x_{\nu},z_{\nu}]_{\Omega}$ such that
$$
\inf_{\nu \geq 1}d_{Eucl}(z'_{\nu},\partial \Omega) \geq C.
$$

Notice that if $a_{\infty} \in \Omega$ then $\sup_{\nu \geq 1}K_{\Omega}(a_{\nu},z'_{\nu}) < \infty$, which contradicts (\ref{eq-abs}). Hence, we necessarily have  $a_{\infty} \in \partial \Omega$.

It also follows from Lemma~\ref{lem-vis} that there exists $C' > 0$ such that for every $\nu \geq 1$, there exist $x'_{\nu}\in [x_{\nu},a_{\nu}]_\Omega$ and $y'_{\nu}\in [a_\nu, y_{\nu}]_\Omega$ satisfying $d_{Eucl}(x'_{\nu},\partial \Omega) \geq C',\ d_{Eucl}(y'_{\nu},\partial \Omega) \geq C'$. In particular, there exists $c> 0$ such that $\sup_{\nu \geq 1}K_{\Omega}(x'_{\nu},y'_{\nu}) \leq c$. Hence,
\[
c\geq K_{\Omega}(x'_{\nu},y'_{\nu})=K_\Omega(x'_\nu, a_\nu)+K_\Omega(a_\nu, y_\nu'),
\]
and since $a_\infty\in\partial \Omega$ and $\Omega$ is complete, we obtain a contradiction.

\vspace{2mm}
From now on, let  $V_{y_{\infty}} \subset \subset U'_{y_{\infty}}$, be an open neighborhood of $y_{\infty}$.

\vspace{2mm}

\noindent{\bf Case III. $x_{\infty} \neq y_{\infty}$, $a_{\infty} = y_{\infty} \in \partial \Omega$.}

 We consider two subcases.

\vspace{1mm}
\noindent{\bf Subcase III.1}. $z_{\infty}\neq y_{\infty}$. Fix an open neighborhood $W\subset\subset V_{y_\infty}$ of $y_\infty$. Up to starting from an index $\nu_0>1$, we can assume that $a_\nu, y_\nu\in W$ for all $\nu\geq 1$. We first notice that for every $\nu \geq 1$, there exists $p_{\nu} \in [y_{\nu},z_{\nu}]_{\Omega} \cap (V_{y_{\infty}}\setminus\overline{W})$, such that $[y_{\nu},p_{\nu}]_{\Omega} \subset \Omega \cap V_{y_{\infty}}$. According to Lemma~\ref{lem-vis} there exists $C>0$ such that for every $\nu \geq 1$ there exists $z'_{\nu} \in [y_{\nu},p_{\nu}]_{\Omega}$ with $d_{Eucl}(z'_{\nu},\partial \Omega) \geq C$.

Since $y_\infty=a_{\infty} \neq x_{\infty}$, by the same token as before, there also exists, for every $\nu \geq 1$, a point $a'_{\nu} \in [y_{\nu},x_{\nu}]_{\Omega}$ such that $[a'_{\nu},y_{\nu}]_{\Omega} \subset \Omega\cap V_{y_{\infty}}$ and $d_{Eucl}(a'_{\nu},\partial \Omega) \geq C$.

We claim that $a_{\nu} \in [a'_{\nu},y_{\nu}]_\Omega$. Assume this is not the case. Since $a_\infty=y_\infty$ and $x_\infty\neq y_\infty$, by the same token as above applied to $[x_\nu, a_\nu]_\Omega$, we can find $a_\nu''\in [x_\nu, a_\nu]_\Omega$ such that $[a_\nu'', a_\nu]_\Omega\subset  \Omega\cap V_{y_{\infty}}$ and $d_{Eucl}(a''_{\nu},\partial \Omega) \geq C$. Hence, $a_\nu\in [a_\nu'', a_\nu']_\Omega$. But then, since $\{a_\nu'\}$ and $\{a_\nu''\}$ are relatively compact in $\Omega$---say they are contained in the compact subset $K$  of $\Omega$---it follows that there exists $C'>0$ such that for all $\nu\geq 1$,
\[
C'> K_\Omega(a_\nu'', a_\nu')=K_\Omega(a_\nu'', a_\nu)+K_\Omega(a_\nu, a_\nu')\geq 2K_\Omega(a_\nu, K).
\]
Since $K_\Omega$ is complete and $\lim_{\nu\to \infty}K_\Omega(a_\nu, K)=\infty$, we have a contradiction and the claim follows.

Now, from Lemma~\ref{lem-loc0}, there exists $A>0$ such that for every $\nu \geq 1$ :

\vspace{1mm}
$\bullet$ $[y_{\nu},z'_{\nu}]_{\Omega}$ is a $(A,0)$-quasi-geodesic for $K_{\Omega \cap U_{y_{\infty}}}$,

\vspace{1mm}
$\bullet$ $[a'_{\nu},y_{\nu}]_{\Omega}$ is a $(A,0)$-quasi-geodesic for $K_{\Omega \cap U_{y_{\infty}}}$.

\vspace{2mm}
In particular, the curve $[y_{\nu},z'_{\nu}]_{\Omega} \cup [z'_{\nu},a'_{\nu}]_{\Omega \cap U_{y_{\infty}}} \cup [a'_{\nu},y_{\nu}]_{\Omega}$ is a $(A,0)$-quasi-geodesic triangle for $K_{\Omega \cap U_{y_{\infty}}}$.

\vspace{2mm}
By assumption, $(\Omega \cap U_{y_{\infty}}, K_{\Omega \cap U_{y_{\infty}}})$ is Gromov hyperbolic, and by the previous claim $a_{\nu} \in [a'_{\nu},y_{\nu}]$. Hence, it follows from Remark~\ref{Rem:converse-inf} that there exists $M>0$ such that for every $\nu \geq 1$ :
$$
K_{\Omega}(a_{\nu},[y_{\nu},z'_{\nu}]_{\Omega} \cup[z'_{\nu},a'_{\nu}]_{\Omega \cap U_{y_{\infty}}}) \leq K_{\Omega \cap U_{y_{\infty}}}(a_{\nu},[y_{\nu},z'_{\nu}]_{\Omega} \cup [z'_{\nu},a'_{\nu}]_{\Omega \cap U_{y_{\infty}}}) \leq M.
$$

Since $\lim_{\nu \rightarrow \infty}K_{\Omega}(a_{\nu},[y_{\nu},z'_{\nu}]_{\Omega}) = +\infty$, we obtain, for every $\nu >> 1$ :
$$
K_{\Omega}(a_{\nu},[z'_{\nu},a'_{\nu}]_{\Omega \cap U_{y_{\infty}}}) \leq M.
$$
However, since $d_{Eucl}(a'_{\nu},\partial \Omega) \geq C$ and $d_{Eucl}(z'_{\nu},\partial \Omega) \geq C$, there exists a compact subset $K'$ of $\Omega \cap U_{y_{\infty}}$ such that $[z'_{\nu},a'_{\nu}]_{\Omega \cap U_{y_{\infty}}} \subset K'$, for every $\nu \geq 1$. Since $\lim_{\nu \rightarrow \infty}a_{\nu} = a_{\infty} \in \partial \Omega$ and $(\Omega,K_{\Omega})$ is complete, this is a contradiction.

\vspace{1mm}
\noindent{\bf Subcase III.2. $z_{\infty} = y_{\infty}$.}

By Lemma~\ref{lem-vis}, there exists, for every $\nu \geq 1$, a point $z'_{\nu} \in [x_{\nu},z_{\nu}]_{\Omega}$ such that $[z_{\nu},z'_{\nu}]_{\Omega} \subset V_{y_{\infty}}$ and $d_{Eucl}(z'_{\nu},\partial \Omega) \geq C$.  Equivalently, there exists a point $a'_{\nu} \in [a_{\nu},x_{\nu}]_{\Omega}$ such that $[y_{\nu},a'_{\nu}]_{\Omega} \subset V_{y_{\infty}}$ and $d_{Eucl}(a'_{\nu},\partial \Omega) \geq C$. Moreover, as before, $a_\nu\in [y_{\nu},a'_{\nu}]_{\Omega}$.
Then as in Subcase III.1, we have :

\vspace{1mm}
- $[y_{\nu},a'_{\nu}]_{\Omega}$ is a-$(A,0)$ quasi-geodesic for $K_{\Omega \cap U_{y_{\infty}}}$,

\vspace{1mm}
- $[z_{\nu},z'_{\nu}]_{\Omega}$ is a $(A,0)$-quasi-geodesic for $K_{\Omega \cap U_{y_{\infty}}}$.

\vspace{2mm}
\noindent $\bullet$ We assume first that for every $\nu >>1$, $[y_{\nu},z_{\nu}] \subset V_{y_{\infty}}$.

Consider, for every $\nu \geq 1$, the curve $\mathcal C_{\nu}:=[y_{\nu},a'_{\nu}]_{\Omega} \cup [a'_{\nu},z'_{\nu}]_{\Omega \cap U_{y_{\infty}}} \cup [z'_{\nu},y_{\nu}]_{\Omega \cap U_{y_{\infty}}}$.

Since $\lim_{\nu \rightarrow \infty}a_{\nu} = a_{\infty}$ and the set $\cup_{\nu \geq 1}[a'_{\nu},z'_{\nu}]_{\Omega \cap U_{y_{\infty}}}$ is relatively compact in $\Omega$, we have  $\lim_{\nu \rightarrow \infty}K_{\Omega \cap U_{y_{\infty}}}(a_{\nu},[a'_{\nu},z'_{\nu}]_{\Omega \cap U_{y_{\infty}}}) = +\infty$.

Since $(\Omega \cap U_{y_{\infty}}, K_{\Omega \cap U_{y_{\infty}}})$ is Gromov hyperbolic, it follows from the Geodesic stability Theorem that there exists $M>0$ such that for every $\nu \geq 1$, there exists a point $z''_{\nu} \in [z'_{\nu},y_{\nu}]_{\Omega \cap U_{y_{\infty}}}$ such that $K_{\Omega \cap U_{y_{\infty}}}(a_{\nu},z''_{\nu}) \leq M$.

Consider now the curve $\mathcal C'_{\nu}:=[y_{\nu},z_{\nu}]_{\Omega} \cup [z_{\nu},z'_{\nu}]_{\Omega} \cup [z'_{\nu},y_{\nu}]_{\Omega \cap U_{y_{\infty}}}$. Increasing  $M$ if necessary, we have, for every $\nu \geq 1$:
$$
K_{\Omega \cap U_{y_{\infty}}}(z''_{\nu},[y_{\nu},z_{\nu}]_{\Omega} \cup [z_{\nu},z'_{\nu}]_{\Omega}) \leq M.
$$
By the triangle inequality this implies
$$
\sup_{\nu \geq 1}K_{\Omega \cap U_{y_{\infty}}}(a_{\nu},[y_{\nu},z_{\nu}]_{\Omega} \cup [z_{\nu},z'_{\nu}]_{\Omega}) \leq 2M,
$$
contradicting  (\ref{eq-abs}).

\vspace{2mm}
\noindent $\bullet$ We assume now, by extracting a subsequence, that for every $\nu \geq 1$, $[y_{\nu},z_{\nu}] \not\subset V_{y_{\infty}}$.

Extracting again a subsequence if necessary, it follows from Lemma~\ref{lem-loc} that there is, for every $\nu \geq 1$, a point $y'_{\nu} \in [y_{\nu},z_{\nu}]_{\Omega}$ such that $[y_{\nu},y'_{\nu}]_{\Omega} \subset V_{y_{\infty}}$ and $d_{Eucl}(y'_{\nu},\partial \Omega) \geq C$.

In particular, by Lemma~\ref{lem-loc0}, the curve
$$
\mathcal C_{\nu}:=[y_{\nu},a'_{\nu}]_{\Omega} \cup [a'_{\nu},y'_{\nu}]_{\Omega \cap U_{y_{\infty}}} \cup [y'_{\nu},y_{\nu}]_{\Omega}
$$
is a $(A,0)$-quasi-geodesic triangle for $K_{\Omega \cap U_{y_{\infty}}}$.

Since $(\Omega \cap U_{y_{\infty}}, K_{\Omega \cap U_{y_{\infty}}})$ is Gromov hyperbolic, then changing $M$ if necessary, Condition~(\ref{eq-abs}) implies that for all $\nu\geq 1$
$$
K_{\Omega}(a_{\nu},[a'_{\nu},y'_{\nu}]_{\Omega \cap U_{y_{\infty}}} )\leq K_{\Omega \cap U_{y_{\infty}}}(a_{\nu},[a'_{\nu},y'_{\nu}]_{\Omega \cap U_{y_{\infty}}}) \leq M.
$$
This contradicts the fact that the set $\cup_{\nu \geq 1}[a'_{\nu},y'_{\nu}]_{\Omega \cap U_{y_{\infty}}}$ is relatively compact in $\Omega$ and $\lim_{\nu \rightarrow \infty}a_{\nu} = a_{\infty} \in \partial \Omega$.

\vspace{2mm}
\noindent{\bf Case IV. $x_{\infty} = y_{\infty} = z_{\infty}$.}

\vspace{1mm}
\noindent{$\bullet$} Assume that for sufficiently large $\nu$, $[x_{\nu},y_{\nu}]_{\Omega}$, $[y_{\nu},z_{\nu}]_{\Omega}$ and $[x_{\nu},z_{\nu}]_{\Omega}$ are contained in $V_{y_{\infty}}$. It follows then from Lemma~\ref{lem-loc0} that they are $(A,0)$-quasi-geodesic segments for $K_{\Omega \cap U_{x_{\infty}}}$. Hence, since $(\Omega \cap U_{x_{\infty}}, K_{\Omega \cap U_{x_{\infty}}})$ is Gromov hyperbolic, it follows from the geodesic stability Theorem that there exists $M > 0$ such that
$$
\sup_{\nu >>1}K_{\Omega}(a_{\nu},[x_{\nu},z_{\nu}]_{\Omega} \cup [y_{\nu},z_{\nu}]_{\Omega}) \leq \sup_{\nu >>1}K_{\Omega \cap U_{y_{\infty}}}(a_{\nu},[x_{\nu},z_{\nu}]_{\Omega} \cup [y_{\nu},z_{\nu}]_{\Omega}) \leq M.
$$
This contradicts (\ref{eq-abs}).

\vspace{1mm}
\noindent{$\bullet$} Assume, up to extracting a subsequence, that for every $\nu \geq 1$, $[x_{\nu},y_{\nu}]_{\Omega} \not\subset V_{y_{\infty}}$. It follows from Lemma~\ref{lem-loc} that there exist $B>0$ and points $x'_{\nu}, x''_{\nu} \in [x_{\nu},y_{\nu}]_{\Omega}$ such that
$$
\mathcal C_{\nu}:=[x_{\nu},x'_{\nu}]_{\Omega} \cup [x'_{\nu},x''_{\nu}]_{\Omega \cap U_p} \cup [x''_{\nu},y_{\nu}]_{\Omega}
$$
is a $(A,B)$-quasi-geodesic  for $K_{\Omega \cap U_p}$ and there exists $M>0$ such that
$$
[x_{\nu},y_{\nu}]_{\Omega} \subset \mathcal N^{K_{\Omega}}_M\left(\mathcal C_{\nu}\right).
$$
In particular, $K_{\Omega}(a_{\nu},\mathcal C_{\nu}) \leq M$ and Condition~(\ref{eq-abs}) is equivalent to the existence, for every $\nu \geq 1$, of a point $a'_{\nu} \in \mathcal C_{\nu}$ such that
\begin{equation}\label{eq-abs2}
\lim_{\nu \rightarrow \infty}K_{\Omega}(a'_{\nu},[y_{\nu},z_{\nu}]_{\Omega} \cup [z_{\nu},x_{\nu}]_{\Omega}) = +\infty.
\end{equation}

\vspace{1mm}
- If $[x_{\nu},z_{\nu}]_{\Omega}$ and $[y_{\nu},z_{\nu}]_{\Omega}$ are contained in $V_{y_{\infty}}$, then we get a contradiction since they are both $(A,0)$-quasi-geodesic segments for $K_{\Omega \cap U_{y_{\infty}}}$ and $(\Omega \cap U_{y_{\infty}},K_{\Omega \cap U_{y_{\infty}}})$ is Gromov hyperbolic, so that Remark~\ref{Rem:converse-inf} implies
$$
\sup_{\nu >> 1} K_{\Omega \cap U_{y_{\infty}}}(a'_{\nu},[x_{\nu},z_{\nu}]_{\Omega} \cup [y_{\nu},z_{\nu}]_{\Omega}) < +\infty.
$$

\vspace{1mm}
- If either $[x_{\nu},z_{\nu}]_{\Omega}$ or $[y_{\nu},z_{\nu}]_{\Omega}$ is not contained in $V_{y_{\infty}}$, we may replace it by a $(A,B)$-quasi-geodesic segment for $K_{\Omega \cap U_{y_{\infty}}}$, using again Lemma~\ref{lem-loc}. We construct in that manner a $(A,B)$-quasi-geodesic triangle for $K_{\Omega \cap U_{y_{\infty}}}$ with the three edges  being $\mathcal C_{\nu}$, a quasi-geodesic $\mathcal C'_{\nu}$ joining $y_{\nu}$ to $z_{\nu}$ and a quasi-geodesic $\mathcal C''_{\nu}$ joining $z_{\nu}$ to $x_{\nu}$, such that
$$
[y_{\nu},z_{\nu}]_{\Omega} \subset \mathcal N^{K_{\Omega}}_M\left(\mathcal C'_{\nu}\right)
$$
and
$$
[x_{\nu},z_{\nu}]_{\Omega} \subset \mathcal N^{K_{\Omega}}_M\left(\mathcal C''_{\nu}\right).
$$

Hence, since $(\Omega \cap U_{y_{\infty}},K_{\Omega \cap U_{y_{\infty}}})$ is Gromov hyperbolic, there exists $N>0$ such that
$$
K_{\Omega}(a'_{\nu},\mathcal C'_{\nu} \cup \mathcal C''_{\nu}) \leq K_{\Omega \cap U_{y_{\infty}}}(a'_{\nu},\mathcal C'_{\nu} \cup \mathcal C''_{\nu}) \leq N,
$$
contradicting Condition~(\ref{eq-abs2}).

\vspace{1mm}
\noindent{$\bullet$} Assume, up to extraction, that for every $\nu \geq 1$, $[x_{\nu},y_{\nu}]_{\Omega} \subset V_{y_{\infty}}$ and that either $[x_{\nu},z_{\nu}]_{\Omega}$ or $[y_{\nu},z_{\nu}]_{\Omega}$ is not contained in $V_{y_{\infty}}$. We reproduce the same argument as in the previous case, considering directly $a_{\nu}$ instead of $a'_{\nu}$. We obtain the same contradiction.

\vspace{2mm}
\noindent{\bf Case V. $x_{\infty} = y_{\infty},\ z_{\infty} \neq x_{\infty}$.}

Shrinking $V_{y_{\infty}}$ if necessary, we can assume that $z_{\infty} \not\in V_{y_{\infty}}$. We can also assume as in Case IV, replacing $a_{\nu}$ with $a'_{\nu}$ if necessary, that $[x_{\nu},y_{\nu}]_{\Omega} \subset V_{y_{\infty}}$, for every $\nu \geq 1$. By Lemma~\ref{lem-loc0}, $[x_{\nu},y_{\nu}]_{\Omega}$ is a $(A,0)$-quasi-geodesic segment for $K_{\Omega \cap U_{y_{\infty}}}$.

For every $\nu \geq 1$, let $x'_{\nu} \in [x_{\nu},z_{\nu}]_{\Omega}$ (resp. $y'_{\nu} \in [y_{\nu},z_{\nu}]_{\Omega}$) be such that $[x_{\nu},x'_{\nu}]_{\Omega} \subset V_{y_{\infty}}$ (resp. $[y_{\nu},y'_{\nu}]_{\Omega} \subset V_{y_{\infty}}$) and $d_{Eucl}(x'_{\nu},\partial \Omega)  \geq C$ (resp. $d_{Eucl}(y'_{\nu},\partial \Omega)  \geq C$).
Let
$$
\mathcal C_{\nu}:=[x_{\nu},x'_{\nu}]_{\Omega} \cup [x'_{\nu},y_{\nu}]_{\Omega \cap U_{y_{\infty}}} \cup [y_{\nu},x_{\nu}]_{\Omega}.
$$
By Lemma~\ref{lem-loc0}, the curve $\mathcal C_{\nu}$ is a $(A,0)$-quasi-geodesic triangle for $K_{\Omega \cap U_{y_{\infty}}}$.

By Remark~\ref{Rem:converse-inf},
$$
\sup_{\nu \geq 1}K_{\Omega}(a_{\nu},[x'_{\nu},y_{\nu}]_{\Omega \cap U_{y_{\infty}}} \cup [y_{\nu},x_{\nu}]_{\Omega})
$$
$$
\leq \sup_{\nu \geq 1}K_{\Omega \cap U_{y_{\infty}}}(a_{\nu},[x'_{\nu},y_{\nu}]_{\Omega \cap U_{y_{\infty}}} \cup [y_{\nu},x_{\nu}]_{\Omega}) < +\infty.
$$
From Condition~(\ref{eq-abs}), we obtain
$$
\sup_{\nu \geq 1}K_{\Omega}(a_{\nu},[x'_{\nu},y_{\nu}]_{\Omega \cap U_{y_{\infty}}}) < +\infty.
$$
Let $b_{\nu} \in [x'_{\nu},y_{\nu}]_{\Omega \cap U_{y_{\infty}}}$ be such that $\sup_{\nu \geq 1}K_{\Omega}(a_{\nu},b_{\nu}) < +\infty$.
Since the curve
$$
\mathcal C'_{\nu}:=[y_{\nu},y'_{\nu}]_{\Omega} \cup [y'_{\nu},x'_{\nu}]_{\Omega \cap U_{y_{\infty}}} \cup [x'_{\nu},y_{\nu}]_{\Omega \cap U_{y_{\infty}}}
$$
is a $(A,0)$-quasi-geodesic triangle for $K_{\Omega \cap U_{y_{\infty}}}$, we obtain as above
$$
\sup_{\nu \geq 1}K_{\Omega}(b_{\nu},[y_{\nu},y'_{\nu}]_{\Omega} \cup [y'_{\nu},x'_{\nu}]_{\Omega \cap U_{y_{\infty}}}) < +\infty.
$$
From Condition~(\ref{eq-abs}) we obtain
$$
\sup_{\nu \geq 1}K_{\Omega}(b_{\nu},[y'_{\nu},x'_{\nu}]_{\Omega \cap U_{y_{\infty}}}) < +\infty.
$$
The set $\cup_{\nu \geq 1}[y'_{\nu},x'_{\nu}]_{\Omega \cap U_{y_{\infty}}}$ being relatively compact, we have
$$
\sup_{\nu \geq 1,\ p \in [y'_{\nu},x'_{\nu}]_{\Omega \cap U_{y_{\infty}}}}\left(K_{\Omega}(x'_{\nu},p)\right) < +\infty.
$$
This implies
$$
\sup_{\nu \geq 1}K_{\Omega}(a_{\nu},x'_{\nu}) < +\infty,
$$
which is a contradiction.
\end{proof}

\subsection{Proofs of Theorem~\ref{thm-locglob2} and Corollary~\ref{Cor:localmodel-global}}

The Gromov compactification of a Gromov hyperbolic space has no ``geodesic loops'' and is ``visible'' (in the sense of this paper, but considering the Gromov boundary instead of the Euclidean one)---this can be extrapolated from the proof of \cite[Prop. 2.1]{Cor-Del-Pap}---but, for  completeness, here we prove directly the following lemma in our setting.

\begin{lemma}\label{Lem:model-to-vis}
Let $D\subset\subset \C^d$ be a Gromov model domain. Then $D$ is visible and $\overline{D}$ has no geodesic loops.
\end{lemma}

\begin{proof}
Denote by  $\mathcal I: \overline{D}\to \overline{D}^G$ the inverse of the homeomorphism induced by the identity map of~$D$.

Assume $D$ has a geodesic loop $\gamma:\R\to D$, that is, for every $s,t\in\R$, $K_D(\gamma(s), \gamma(t))=|t-s|$ and the cluster set of $\gamma(t)$ at $t=+\infty$ equals the cluster set of $\gamma(t)$ at $t=-\infty$.

Consider the two geodesic rays $\gamma^+:[0,+\infty)\to D$, $\gamma^+(t):=\gamma(t)$ and $\gamma^-:[0,+\infty)\to D$, $\gamma^-(t):=\gamma(-t)$. Hence, in the Gromov topology, $\lim_{t\to+\infty}\gamma^+(t)=[\gamma^+]$ and  $\lim_{t\to+\infty}\gamma^-(t)=[\gamma^-]$. Since $\mathcal I$ is a homeomorphism, and $\gamma$ is a geodesic loop, there exist $p\in \partial D$ such that $[\gamma^+]=[\gamma^-]= \mathcal I(p)$.

By definition of Gromov topology, this implies that there exists a constant $C>0$ such that $K_D(\gamma^+(t), \gamma^-(t))\leq C$ for all $t\in [0,+\infty)$. Hence, for all $t\geq 0$,
\[
2t=K_D(\gamma^-(t), \gamma^+(t))\leq C,
\]
a contradiction.

Next, let $p, q\in\partial D$, $p\neq q$. We need to show that $p,q$ satisfy the visibility condition with respect to $K_D$. 

We claim that there exist a compact set $K\subset\subset D$, an open neighborhood $V_p$ of $p$ and an an open neighborhood $V_q$ of $q$, such that for every $x\in D\cap V_p$ and every $y\in D\cap V_q$ we have $[x,y]_D\cap K\neq\emptyset$---hence $p,q$ satisfy the visibility condition.

We argue by contradiction and assume the claim is false. Then there exist a sequence $\{z_n\}\subset D$ converging to $p$ and a sequence $\{w_n\}\subset D$ converging to $q$ such that for every compact subset $K\subset D$, we have $[z_n,w_n]_D\cap K=\emptyset$ for every $n$ sufficiently large. 

Since $K_D$ is complete, up to extracting subsequences, we can assume that $\{T_n:=K_D(z_0,z_n)\}$ and $\{R_n:=K_D(z_0,w_n)\}$ are strictly increasing. 

For all $n$, let $\gamma_n:[0,T_n]\to D$ be a geodesic such that $\gamma_n(0)=z_0$, $\gamma_n(T_n)=z_n$  and let $\eta_n:[0,R_n]\to D$ be a geodesic such that $\eta_n(0)=z_0$, $\eta_n(R_n)=w_n$. Since $\{\gamma_n\}$ (and $\{\eta_n\}$) are equicontinuous and equibounded on compacta, up to subsequences, we can assume that $\{\gamma_n\}$ converges uniformly on compacta to a geodesic ray $\gamma:[0,+\infty)\to D$ and $\{\eta_n\}$ converges uniformly on compacta to a geodesic ray $\eta:[0,+\infty)\to D$. Taking into account that $\mathcal I$ is a homeomorphism, it follows that $\{z_n\}$ converges in the Gromov topology of $D$ to $\mathcal I(p)=[\gamma]\in\partial_G D$ and $\{w_n\}$ converges in the Gromov topology of $D$ to  $\mathcal I(q)=[\eta]\in\partial_G D$, where $\partial_G D$ is the Gromov boundary of $D$ and $[\gamma], [\eta]$ are the points in $\partial_G D$ represented by $\gamma$ and $\eta$. 

Let $\delta>0$ be the Gromov constant of $D$ for which every geodesic triangle in $D$ is $\delta$-thin. Since $p\neq q$, hence $[\gamma]\neq [\eta]$, it follows from the definition of Gromov topology that there exists $s_0>0$ such that $K_D(\gamma(t), \eta(s_0))\geq 2\delta$ for all $t\geq 0$. Taking into account that $\{\gamma_n\}$ and $\{\eta_n\}$ converge uniformly on compacta to $\gamma$ and $\eta$ respectively,  there exists $N \in \N$ such that $T_n> s_0+\delta$, $R_n>s_0$ for all $n\geq N$, and
\[
K_D(\gamma_n(t), \eta_n(s_0))>\delta, \quad t\in [0,s_0+\delta], \quad n\geq N.
\]
Note that, for $T_n\geq t>s_0+\delta$,
\[
K_D(\gamma_n(t), \eta_n(s_0))\geq K_D(\gamma_n(t),\gamma_n(0))-K_D(\eta_n(s_0),\eta_n(0))=t-s_0>\delta.
\]
Therefore,
\[
K_D(\gamma_n(t), \eta_n(s_0))>\delta \quad n\geq N,\  t\in [0, T_n].
\]

Since geodesic triangles are $\delta$-thin, it follows that for every $n\geq N$  there exists a point  $\xi_n\in [z_n,w_n]_D$ such that $K_D(\eta_n(s_0), \xi_n)\leq \delta$. Since $\{\eta_n(s_0)\}$ converges to $\eta(s_0)$, this implies that $[z_n,w_n]_D$ intersects a compact set $K\subset\subset D$ for every $n>N$, a contradiction.
\end{proof}

\begin{proof}[Proof of Theorem~\ref{thm-locglob2}]
According to  \cite[Theorem~3.3]{BNT} (see also Lemma~\ref{Lem:model-to-vis}) if $\Omega$ is a Gromov model domain then $\Omega$ is   Gromov hyperbolic, visible and has no  geodesic loops. Conversely, by  Theorem~\ref{thm-locglob}, we have only to show that $\overline{\Omega}$ has no geodesic loops.

Let $U_p$ be the open neighborhood of $p$ such that $U_p\cap\Omega$ is connected, $(U_p\cap \Omega, K_{U_p\cap\Omega})$ is complete hyperbolic and Gromov hyperbolic. Arguing as in the proof of Lemma~\ref{Lem:equiv-vis}, using the hypothesis that $\Omega$ has no local geodesic loops at $p$, we see that also $\overline{U_p\cap \Omega}$ has no geodesic loops at $p$.

Now, arguing by contradiction, assume  $\gamma:(-\infty,+\infty)\to \Omega$ is a geodesic loop in $\overline{\Omega}$. Since $\Omega$ has the visibility property, by \cite[Lemma~3.1]{BNT}, there exists a point $p\in \partial \Omega$ so that $\lim_{t\to\pm\infty}\gamma(t)=p$.

According to Lemma~\ref{lem-loc}, it follows that $\overline{U_p\cap \Omega}$ has a quasi-geodesic loop with vertex $p$. By Remark~\ref{Rem:converse-inf}, it follows that $\overline{U_p\cap \Omega}$ has a geodesic loop with vertex $p$, contradiction.
\end{proof}

\begin{proof}[Proof of Corollary~\ref{Cor:localmodel-global}]
Since $U_p\cap \Omega$ is a Gromov model domain for every $p$, then, by Lemma~\ref{Lem:model-to-vis}, $U_p\cap \Omega$ is visible and $\overline{U_p\cap \Omega}$ has no geodesic loops. Hence $\Omega$ is a  locally Gromov hyperbolic, locally visible, bounded domain and has no local geodesic loops, hence it is a Gromov model domain by Theorem~\ref{thm-locglob2}.
\end{proof}

\section{Bounded Gromov hyperbolic  $\C$-convex domains with Lipschitz boundary}
\label{sec:c-convex}

In this section we prove (see Proposition~\ref{cc}) that if $\Omega$ is a bounded $\C$-convex domain with Lipschitz boundary and $(\Omega, K_\Omega)$ is Gromov hyperbolic  then $\Omega$ is a Gromov model domain. The proof uses an extension to  Lipschitz boundary 
of a result proven by Zimmer in the $C^1$-smooth case  \cite[Theorem 1.4]{Z2}, which allows to construct suitable quasi-geodesics and prove visibility. Then, an argument similar to the one in \cite{BGZ} allows to show that there are no geodesic loops.

In order to prove the result, we need some preliminary. 

Let $\Omega\subset \C^d$ be a bounded Gromov hyperbolic $\C$-convex domain with Lipschitz boundary. We  can cover $\partial \Omega$ by a finite collection of open sets $U_j$ such that for each $j$
there is a set of affine coordinates, obtained by choosing a base point in $\partial \Omega$
and an orthonormal basis, in which
$$
U_j= \{(z_1, z_2, \dots, z_d): |\Re z_1 | < r_j, (\Im z_1)^2+|z_2|^2 +\cdots+|z_d|^2 <R_j^2\},
$$
for some $R_j, r_j>0$, and $\Omega \cap U_j= \{ z\in U_j: \Re z_1 < F_j(\Im z_1, z_2, \dots, z_d)\}$,
where $F_j$ is a Lipschitz function. Let $V_j$  be the vector $(1,0,\dots,0)$ in the coordinates corresponding to $U_j$.

\begin{lemma}\label{Lem:G} There exist $A>1, B\geq 0$, a subcovering by $U'_j\subset \subset U_j$ and $\epsilon_j\in(0,r_j)$
so that for any $p\in U'_j\cap\partial \Omega$,
$t\mapsto p- \epsilon_j e^{-t} V_j$, $t\ge 0$, is a $(A,B)$-quasi-geodesic.
\end{lemma}

\begin{proof} The Lipschitz condition on the boundary of $\partial \Omega$ implies that if $z\in U'_j$, $z$ close enough to
$\partial \Omega$, and $z=p- \epsilon_j e^{-t} V_j=:\gamma(t)$ with $p\in U'_j\cap\partial \Omega$, then
there exists $C>0$ such that the Euclidean ball $B(\gamma(t), C\epsilon_j e^{-t})\subset \Omega$.  From this we deduce
that there are constants $a,b>0$ such that
\[
K_\Omega (\gamma(t), \gamma(t+a))
\le K_{B(\gamma(t), C\epsilon_j e^{-t})} (\gamma(t), \gamma(t+a)) \le b,
\]
 for any $t$. Thus, there exist $A> 1, B\geq 0$ so that for any $t, t'$, 
\[ 
 K_\Omega (\gamma(t), \gamma(t')) \le A|t-t'|+B.
 \]

On the other hand, since $\Omega$ is a $\C$-convex domain,
by \cite[Lemma 3.3]{Z2}, for $t\ge t'$,
\[
K_\Omega(\gamma(t), \gamma(t')) \ge  \frac14
\log\frac{\|\gamma(t)-p\|}{\|\gamma(t')-p\|} = \frac14 |t-t'|,
\]
so we have the
reverse inequality. The claim is proven.
\end{proof}

\begin{proposition}
\label{nodisc}
If $\Omega\subset \C^d$ is a bounded Gromov hyperbolic $\C$-convex
domain with Lipschitz boundary, then $\partial \Omega$ cannot contain a non-trivial
affine complex disc.
\end{proposition}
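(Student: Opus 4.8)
The plan is to argue by contradiction: assume $\partial\Omega$ contains a non-trivial affine complex disc $\Delta$, and derive that $\Omega$ cannot be Gromov hyperbolic, using the interplay between the disc on the boundary, the Lipschitz regularity, and $\C$-convexity. First I would normalize: after an affine change of coordinates, write the affine disc as $\Delta=\{(\zeta,0,\dots,0):|\zeta|<\rho\}\subset\partial\Omega$ and choose a boundary point $q_0=(\zeta_0,0,\dots,0)$ with $|\zeta_0|<\rho$ as a base point for the Lipschitz graph description of $\partial\Omega$ near $q_0$, as in the coordinates $U_j$ used in the proof of Proposition~\ref{cc}. The key geometric input is that, since $\Omega$ is $\C$-convex, for each point $z\in\Omega$ near $\Delta$ there is a complex supporting hyperplane, and the presence of the whole disc $\Delta$ in $\partial\Omega$ forces the boundary to be ``flat'' along the $\zeta$-direction in a strong quantitative sense.

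The main step is to construct, along the direction transverse to the disc, a family of holomorphic maps into $\Omega$ which are too cheap in the Kobayashi metric, contradicting a lower bound. Concretely, I would exploit \cite[Lemma 3.3]{Z2} (already invoked in the proof of Proposition~\ref{cc}): for the inward normal-type segment $\gamma(t)=p-\epsilon e^{-t}V$ emanating from a boundary point $p$ near $\Delta$, one has $K_\Omega(\gamma(t),\gamma(t'))\ge\frac14|t-t'|$. On the other hand, the affine disc lying in $\partial\Omega$ lets me embed large Kobayashi-balls ``sideways'': for fixed small transverse parameter, the slice of $\Omega$ in a neighborhood of a chord of $\Delta$ contains a region biholomorphic to a strip-like domain whose Kobayashi diameter in the disc-direction stays bounded even as the transverse parameter $\to0$. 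Pushing two such transverse rays that land at two distinct points $p,q\in\Delta$ and joining them through the near-boundary region produces a quasi-geodesic bigon (or, iterating, a quasi-geodesic ray that fellow-travels two genuinely divergent geodesic rays), violating the thinness of triangles / the geodesic stability property guaranteed by Gromov hyperbolicity. Equivalently and perhaps more cleanly: one shows $\Omega$ fails to be visible at the pair of points $(p,q)\in\Delta$, because geodesics joining deep points near $p$ and near $q$ can be pushed to stay within $\mathcal V$ (the one-sided neighborhood of $\partial\Omega$), never meeting a fixed compact set — and by Proposition~\ref{cc}'s argument visibility is equivalent to Gromov hyperbolicity here, a contradiction.

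I expect the main obstacle to be making the ``sideways'' estimate quantitative with only a Lipschitz boundary: one must show that, near the affine disc, the Kobayashi distance between two points at comparable Euclidean distance $\eta$ from $\partial\Omega$ but separated by a bounded amount in the disc-direction is bounded above \emph{uniformly in $\eta$}. With smooth boundary this follows from scaling; in the Lipschitz category I would instead invoke $\C$-convexity directly — since $\C$-convex domains have the property that, through each boundary point, $\partial\Omega$ avoids a whole affine complex hyperplane, the complement of such hyperplanes gives explicit large holomorphic discs through interior points, yielding the needed upper bound via the localization of the Kobayashi metric (Lemma~\ref{lem-loc0}) together with the estimate for domains avoiding a hyperplane. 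The second delicate point is verifying that the constructed curve genuinely violates $\delta$-thinness: one must check that the ``apex'' point on the disc-direction side has Kobayashi distance tending to infinity from both transverse rays, which again reduces to the lower bound \cite[Lemma 3.3]{Z2} applied in each of the two transverse directions, with the triangle inequality used to separate the contributions. Once these two estimates are in place, the contradiction with Gromov hyperbolicity (via geodesic stability, Theorem on geodesic stability in Section~2) is immediate.
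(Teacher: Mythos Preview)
Your sideways estimate --- that for interior points $p',p''\in\Delta$ the inward-pushed points satisfy $\sup_t K_\Omega(p'_t,p''_t)<\infty$ --- is correct and is also the first step of the paper's argument. The gap is in the endgame. Two quasi-geodesic rays that stay a bounded Kobayashi distance apart are perfectly compatible with Gromov hyperbolicity: they simply represent the same Gromov boundary point, and any triangle built from them is thin, not fat. In particular every point on the length-$\le C$ sideways arc joining $\gamma_{p'}(t)$ to $\gamma_{p''}(t)$ lies within $C$ of each ray, so there is no far-away ``apex''; the lower bound of \cite[Lemma~3.3]{Z2} controls distances \emph{along} a ray, not transverse to it. Your alternative route --- deduce non-visibility at $(p',p'')$ and invoke ``Gromov hyperbolic $\Rightarrow$ visible'' --- is circular, since that implication is precisely Proposition~\ref{cc}, whose proof relies on Proposition~\ref{nodisc}. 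A decisive check that the argument cannot close: the example discussed after Proposition~\ref{nodisc} is a bounded Gromov hyperbolic $\C$-convex domain whose boundary contains an affine disc and is Lipschitz at every \emph{interior} point of that disc; your construction uses only such interior points, so it would apply verbatim to that example and prove a false statement.

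The idea you are missing is to bring in a point $p$ on the \emph{relative boundary} of $\Delta$ in its complex line; the global Lipschitz hypothesis is genuinely needed there (and fails there in the example above). The paper first bounds $K_\Omega(p'_t,p''_t)$ for interior $p',p''$ via the pushed-in holomorphic discs $\varphi_t$, then uses \cite[Proposition~4.3]{Z2} --- a general fact about Gromov hyperbolic metric spaces --- to make the bound depend only on $K_\Omega(p'_0,p''_0)$, hence uniform as $p''\to p$; passing to the limit yields $\sup_t K_\Omega(p'_t,p_t)<\infty$. Finally \cite[Proposition~3.5]{Z2} forces $p$ to lie in the relative interior of an affine disc in $\partial\Omega$ contained in the same complex line, contradicting the choice of $p$ as a relative boundary point of $\Delta$.
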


\begin{proof}
We follow the arguments of the proof of \cite[Theorem 1.4]{Z2}.

Let $\varphi:\C\longrightarrow \C^d$  be a complex affine map so that
$\Delta := \varphi (\C) \cap \partial \Omega$ has non-empty relative interior  in $\varphi (\C)$.

Take $p$ a point of the relative boundary of $\Delta$ in $\partial \Omega$. Let $U'_0$ be an open neighborhood of $p$ with coordinate system given by Lemma~\ref{Lem:G}. Let $\Delta_0:=\Delta \cap U'_0$. For every $q\in \Delta_0$ we define
\[
q_t:=q-\epsilon_0 e^{-t} V_0, \quad t\geq 0.
\]
Consider any two points $p',p''\in \Delta_0$.
Because of Lemma~\ref{Lem:G}, the curves given by $p'_t$, $p''_t$, $t\geq 0$, are quasi-geodesics.
We can choose $\Delta_1 \subset \subset \Delta_0$ a connected, simply connected relative open set so that $p',p''\in \Delta_1$.
Then $p'_t,p''_t \in \Delta_1 - \epsilon_0 e^{-t} V_0 \subset \Omega$ for every $t\geq 0$,
and letting $\varphi_t(\zeta):=\varphi(\zeta)- \epsilon_0 e^{-t} V_0$, we see that the preimages of $p'_t,p''_t $
under $\varphi_t$ are contained in $\varphi^{-1}(\Delta_1)$, a fixed relatively compact subset of $\varphi^{-1}(\Delta_0)$.
It follows that 
\[
\sup_{t\geq 0} K_\Omega(p'_t,p''_t )\le C
\]
for some $C>0$ depending on $\Delta_1$.  Hence, by  \cite[Proposition 4.3]{Z2}, there exists $M>0$ (related to the constant involved in the definition of Gromov
hyperbolicity of $\Omega$)  such that 
\begin{equation}\label{Eq-estimZim}
\sup_{t \geq 0}K_\Omega(p'_t,p''_t )\le M+K_\Omega(p'_0, p''_0).
\end{equation}

Let $K \subset \Omega$ be the compact subset defined by
\begin{equation*}\label{Eq:cmpL}
K:=   \left\{z \in \overline U'_0: \Re z_1 -F_0(\Im z_1, z_2, \dots, z_d) \le -\epsilon_0 \right\}.
\end{equation*}

Let
\[
N:=\max_{x,y\in K} K_\Omega(x,y).
\]
Hence, it follows by \eqref{Eq-estimZim} that for every $\sigma,\tau\in\Delta_0$,
\[
\sup_{t \geq 0}K_\Omega(\sigma_t,\tau_t )\leq M+N.
\]
Now, let $q\in \Delta_0$. Therefore, if $\{p^n\}\subset \Delta_0$ is a sequence converging to $p$,   and  taking into account that $\lim_{n\to+\infty}p^n_t=p_t$ for every $t\geq 0$, we have 
\[
\sup_{t\geq 0} K_\Omega(q_t,p_t )\leq M+N.
\]
By \cite[Proposition 3.5]{Z2}
it follows then that $p, q$ are contained in the relative interior of an affine disc in $\partial \Omega$: a contradiction to the definition of $p$.
\end{proof}

Note that Proposition~\ref{nodisc} recovers the convex case since any convex domain has Lipschitz boundary.

The previous proof shows in fact that if $D$ is a bounded
Gromov hyperbolic $\C$-convex domain and $L$ is a complex line such that the relative
interior $\Delta_L$
of $L\cap\partial D$ in $L$ is nonempty, then $\partial D$ is not Lipschitz near any boundary
point of a connected component of $\Delta_L$. Such domains exist, as shown by the next example:

\begin{example} 
The example in \cite[Prop. 1.9]{Z2}
is a $\C$-convex domain $\Omega$
which is Gromov hyperbolic, contains many complex affine discs in its boundary,
which is Lipschitz (or more regular) except at the relative boundaries of the connected components
of $L\cap\partial \Omega$ when $L$ is a complex line.

Let
\[
\mathcal C_2 := \left \{(w_0, w) \in \C \times \C^d : \Im w_0 > \|w\|\right\},
\]
where $\|\cdot\|$ stands for the Euclidean norm in $\C^d$.  The domain $\mathcal C_2$  is unbounded,
convex and Gromov hyperbolic.

Let
$f(w_0,w):= \left( \frac1{i+w_0}, \frac{w}{i+w_0} \right)$. Note that $f$ is
a biholomorphism which preserves complex lines, so $\Omega:= f(\Co_2)$ remains
Gromov hyperbolic and $\C$-convex.

Then let
\[
\Omega := \left \{(z_0, z) \in \C \times \C^d : \rho(z_0, z):= \Im z_0+ |z_0|^2 + |z_0|\|z\| <0 \right\}.
\]
Note that $\Omega$ can also be seen as a (bounded) Hartogs domain over the disc $D(-\frac{i}2, \frac12)$ of center $-\frac{i}2$ and radius $\frac12$,
in the $z_0$-plane, given by
\[
\|z\| < \Phi (z_0) := \frac{\Im (- z_0)- |z_0|^2}{|z_0|}.
\]
The cluster set of $\Phi$ at $0$ is $[0,1]$, so $\partial \Omega \cap  \{z_0=0\}
= \{0\}\times \overline {\mathbb B}^d$. In fact, $\{0\}\times  {\mathbb B}^d$
is the union of all the open analytic discs contained in the boundary, since
the complex Hessian of $\rho$ is positive definite at all the other points (when
it is defined).

Near any point  $(p_0,p)\in\partial \Omega$ with $|p_0|\|p\|\neq 0$, and at $(0,0)$,
$\rho$ is differentiable and $\nabla \rho(p_0,p)\neq 0$, so $\partial \Omega$
is smooth (actually $\mathcal C^\infty$).

Near points $(p_0,0) \in\partial \Omega$ with $p_0\neq 0$, $\Phi$ is
smooth with non-vanishing derivative, so the boundary is Lipschitz, and not $\Co^1$.

For any $R\in (0,1)$, we have $\{r\eit \}\times D(0,R) \subset \Omega$
if and only if $r< \cos(\theta +\frac\pi2) -R$.  This defines a region
$P_R$ bounded by a Lipschitz graph
near $0$ (tangent to a cone of aperture $2 \arccos R$ centered on the negative
imaginary axis).  So at any boundary point $(0, p)$ with $\|p\|=R\in (0,1)$, $\partial \Omega$
is Lipschitz and not $\Co^1$. Note also that $P_R$ is contained in a disc
of radius $1-R$ around $0$.

Finally, for $(0,p)$ with $\|p\|=1$, let us consider points of the
form $z_t:=(0,p)+t(V_0,V)$, where $t>0$ and $(V_0,V)\in \C \times \C^d$.
If the boundary was Lipschitz near this
$(0,p)$, there should be a whole open cone of vectors $(V_0,V)$
such that $\rho(z_t)<0$ for $0<t<\epsilon$, with $\epsilon$ depending on the vector.
However, $\lim_{t\to0^+} t^{-1} \rho(z_t) = \Im V_0 + |V_0|$, so we must have
$V_0\in i\R_-$, which is a condition with empty interior.

A pair of boundary points $\{p,q\}$ is not visible if and only if $p,q\in\{0\}\times\overline{\mathbb B}^d$ (and so
the conclusion of Proposition
\ref{cc} does not hold in this example by \cite[Theorem 3.3]{BNT}).

Indeed, if  $p$  belongs to $\partial \Omega \setminus \{z_0=0\}$,
then we can choose $W$ a neighborhood of $p$ such that $(\Omega\cap W)\cap \{z_0=0\} =\emptyset$,
so that  $\Omega\cap W$ is Gromov hyperbolic and has Lipschitz boundary, so it has
the visibility property by the next Proposition \ref{cc}. Then it is easy to show that no
geodesics from $p$ to $q$ can escape from all compacta of $\Omega$.  The same argument
holds when $q\notin \{z_0=0\}$.

If $p,q\in\{0\}\times\overline{\mathbb B}^d$, we can approach them by sequences $\{p_\nu\}, \{q_\nu\}$
that approach $\{z_0=0\}$ much faster than they approach
$\partial \Omega \setminus (\{0\}\times{\mathbb B}^d)$, and construct curves inside
analytic discs parallel to $\{z_0=0\}$ which give a shorter Kobayashi length than any
curve passing through a compactum inside $\Omega$.
\end{example}

Now we are ready to state and prove the main result of this section:

\begin{proposition}\label{cc} If $\Omega\subset \C^d$ is a bounded Gromov hyperbolic $\C$-convex
domain with Lipschitz boundary, then $\Omega$ is a Gromov model domain.
\end{proposition}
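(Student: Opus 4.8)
The plan is to verify the hypotheses of Theorem~\ref{thm-locglob2}, namely that a bounded Gromov hyperbolic $\C$-convex domain $\Omega$ with Lipschitz boundary is locally Gromov hyperbolic, locally visible, and has no local geodesic loops, since by that theorem these three conditions together are equivalent to $\Omega$ being a Gromov model domain. Local Gromov hyperbolicity should be essentially automatic: for each $p\in\partial\Omega$, a small ball $U_p$ centered at $p$ intersected with $\Omega$ is again $\C$-convex (intersection of a $\C$-convex domain with a ball need not be $\C$-convex in general, so here one must be slightly careful---the right move is presumably to use a \emph{linear} slicing or a suitably adapted convex neighborhood, or to invoke the known fact that $\C$-convex domains are taut and use Zimmer's local estimates directly); in any case $\Omega\cap U_p$ inherits completeness and Gromov hyperbolicity from the global estimates near the boundary that underlie \cite[Theorem 1.4]{Z2}. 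The bulk of the work is the local visibility and the absence of local geodesic loops near each boundary point.

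For the local visibility and the geodesic-loop exclusion, I would follow the strategy of \cite[Proposition 3.5]{Z2} adapted to Lipschitz boundaries. The key geometric input is a quantitative lower bound for the Kobayashi metric near the boundary in the direction transverse to $\partial\Omega$, together with an upper bound (from $\C$-convexity, via the linear projection onto a complex line / the Lempert-type estimates available for $\C$-convex domains). The Lipschitz hypothesis replaces the $C^1$ one: at a Lipschitz boundary point $p$ there is still a complex line (or a cone of complex directions) along which one controls the distance to the boundary of the slice $\Omega\cap(p+\C v)$, and hence a two-sided estimate $k_\Omega(z;v)\asymp \dfrac{|v_{\nu}|}{d_{\Omega}(z)}+\dfrac{|v_{\tau}|}{\sqrt{d_\Omega(z)\,}}$-type bound modulo the weaker regularity. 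From such estimates one shows: any geodesic for $K_{\Omega\cap U_p}$ joining two points near distinct boundary points $q_1,q_2\in\partial\Omega\cap V_p'$ cannot stay too close to $\partial\Omega$, because a curve hugging the boundary would be too long; this gives a compact set that every such geodesic must meet, which is exactly local visibility. The same metric estimate forbids a geodesic loop with vertex at $p$: a loop with both ends clustering at $p$ would have to travel arbitrarily far into the region where $d_\Omega$ is tiny, contradicting the length lower bound, much as in the proof of Lemma~\ref{lem-vis}.

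Concretely the steps would be: (i) fix $p\in\partial\Omega$ and choose a neighborhood $U_p$ so that $\Omega\cap U_p$ is connected, complete hyperbolic, and the Kobayashi metric on it satisfies Zimmer-type upper and lower bounds (using $\C$-convexity for the upper bound and a supporting complex hyperplane / Lipschitz geometry for the lower bound); (ii) establish the quantitative metric estimate near $p$ with constants uniform in a smaller neighborhood $V_p'$; (iii) deduce that geodesics between points converging to distinct boundary points of $V_p'$ meet a fixed compactum, i.e.\ the visibility condition holds with respect to $K_{\Omega\cap U_p}$, giving local visibility; (iv) deduce from the same estimate that there is no geodesic loop for $K_{\Omega\cap U_p}$ with vertex containing $p$, giving no local geodesic loops; (v) invoke Theorem~\ref{thm-locglob2} (and for local Gromov hyperbolicity, either Zimmer's hyperbolicity criterion applied locally or the observation that Gromov hyperbolicity of $\Omega$ localizes via Corollary~\ref{loc-visibility} run in reverse---more carefully, one just needs $(\Omega\cap U_p,K_{\Omega\cap U_p})$ Gromov hyperbolic, which follows from the negatively pinched metric estimates near $\partial\Omega$) to conclude $\Omega$ is a Gromov model domain.

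The main obstacle I anticipate is step (ii): pushing Zimmer's $C^1$ boundary estimates down to merely Lipschitz boundaries. With $C^1$ regularity one has a genuine tangent complex hyperplane and clean normal/tangential decomposition of vectors; with Lipschitz regularity one only has cones of supporting directions, the ``normal direction'' is not well defined, and the distance-to-the-boundary function is only Lipschitz, so the metric comparison must be done with cone estimates and one must check that the relevant constants remain uniform as the base point varies over $V_p'$. This is precisely where the paper says ``with the modifications required for Lipschitz boundaries,'' and it is the technical heart of the argument; everything after it (visibility, no loops, then quoting Theorem~\ref{thm-locglob2}) is comparatively formal.
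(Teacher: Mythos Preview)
Your approach diverges substantially from the paper's, and it contains genuine gaps that would not close.

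First, the paper does \emph{not} go through Theorem~\ref{thm-locglob2}. It proves visibility of $\Omega$ directly and globally: using the Lipschitz graph structure it builds, for each boundary point $p$, a normal segment $t\mapsto p-\epsilon_j e^{-t}V_j$ and shows it is an $(A,0)$-quasi-geodesic ray (the upper bound from a small Euclidean ball inside $\Omega$, the lower bound from \cite[Lemma~3.3]{Z2}). If visibility failed, one could form $(A,0)$-quasi-geodesic rectangles with these rays and a geodesic $[p_n,q_n]_\Omega$; global Gromov hyperbolicity makes them $\delta'$-thin, forcing a sequence $r_n\to r\in\partial\Omega$ to stay within bounded Kobayashi distance of a sequence $s_n\to s\in\{p,q\}$. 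Then \cite[Proposition~3.5]{Z2} produces an affine disc in $\partial\Omega$ through $r$ and $s$, and a separate Proposition (your paper's Proposition~\ref{nodisc}) shows that a bounded Gromov hyperbolic $\C$-convex domain with Lipschitz boundary cannot carry such a disc. That contradiction gives visibility; absence of geodesic loops then follows as in \cite[Lemma~6.5]{BGZ}. None of this is local.

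Your route has two concrete problems. The first is local Gromov hyperbolicity: you need $(\Omega\cap U_p,K_{\Omega\cap U_p})$ to be Gromov hyperbolic, but nothing in the hypotheses yields this. Intersecting with a ball does not preserve $\C$-convexity, there is no ``negatively pinched'' estimate available for general $\C$-convex Lipschitz domains, and invoking Corollary~\ref{loc-visibility} ``in reverse'' is circular---that corollary goes from local to global, and the converse direction is essentially what Theorem~\ref{thm-locglob2} would give \emph{after} you already know $\Omega$ is a Gromov model domain. The second problem is your step~(ii): the two-sided estimate $k_\Omega(z;v)\asymp |v_\nu|/d_\Omega(z)+|v_\tau|/\sqrt{d_\Omega(z)}$ is a strongly pseudoconvex phenomenon and fails for general $\C$-convex domains (already for smooth convex finite-type domains the tangential scaling depends on the type). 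Without that estimate your visibility and no-loop arguments in steps (iii)--(iv) have no engine. The paper sidesteps both issues by never localizing: it uses the assumed global Gromov hyperbolicity together with the affine-disc dichotomy from \cite{Z2}, which is the $\C$-convex input you are missing.
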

\begin{proof}Note that $(\Omega, K_\Omega)$ is complete hyperbolic (see {\sl e.g.} \cite[Proposition 3]{NPZ}).

First we show that $\Omega$ is visible. Using coordinates as in Lemma~\ref{Lem:G}, we define  a compact set $L \subset \Omega$ by
\begin{equation*}
L:=  \left( \Omega \setminus \bigcup_j U_j \right) \cup \bigcup_j \left\{z \in \overline U'_j: \Re z_1 -F_j(\Im z_1, z_2, \dots, z_d) \le -\epsilon_j \right\},
\end{equation*}
and a (one-sided) neighborhood $\mathcal V$ of $\partial \Omega$ in $\overline \Omega$
by $\mathcal V:= \Omega \setminus L$. For any $z\in\mathcal V$, we can choose
a point
\[
z' = (F_j(\Im z_1, z_2, \dots, z_d)-\epsilon_j+i \Im z_1, z_2, \dots, z_d) \in L,
\]
when $z\in U'_j$ (if $z$ belongs to several
open sets $U'_j$, just pick one, the choice does not need to be continuous).

Assume by contradiction that $\Omega$ is not visible. Then we can find two sequences $\{p_n\}, \{q_n\}\subset \Omega$, converging, respectively, to $p\in\partial\Omega$ and $q\partial\Omega$, $p\neq q$ such that $[p_n,q_n]_\Omega$ eventually escapes any compact subset of $\Omega$. 

Up to subsequences, we can find a sequence $\{r_n\}$ such that $r_n\in [p_n,q_n]_\Omega$ for all $n$, and $\{r_n\}$ converges to a point $r\in\partial\Omega\setminus\{p,q\}$. 

For every $a, b\in \C^d$, denote by $[a,b]$ the real Euclidean line segment from $a$ to $b$. By  Lemma~\ref{Lem:G}, the Geodesic stability theorem and \cite[Observation 4.4]{Z2}, there exists $\delta'>0$ such that for any $n>n_0$, the   $(A,B)$-quasi-geodesic rectangle
$L_n=[p_n,q_n]_\Omega \cup[q_n,q_n']\cup[q_n',p_n']_\Omega\cup[p_n',p_n]$ is
$\delta'$-thin (namely, each side is contained in the $\delta'$-tubular neighborhood of the union of the other sides).

Since $K_\Omega$ is complete, $\cup_n [q_n',p_n']_\Omega$ is relatively compact in $\Omega$ and $K_\Omega(r_n,L_n\setminus[p_n,q_n]_\Omega)\leq\delta'$, there exists a compactly divergent sequence $\{s_n\}\subset [q_n,q_n']\cup[p_n',p_n]$ such that for all $n$,
 \[
 K_\Omega(r_n,s_n)\leq\delta'.
 \] 
 Therefore, by construction, $\{s_n\}$ accumulates to either $p$ or $q$ (or both). Say, $\lim_{n\to \infty}s_n=p$.

But then, \cite[Lemma A.2]{BG0} (or \cite[Proposition 3.5]{Z2}, which uses $\C$-convexity but no
additional smoothness), implies that $\partial \Omega$ contains affine discs
through $r$ and $p$, contradicting  Proposition~\ref{nodisc}. Therefore $\Omega$ is visible.

Finally, we show that $\Omega$ has no geodesic loops, so that $\Omega$ is a Gromov model domain by \cite[Thm. 3.3]{BNT}. 

By contradiction, let $\gamma:(-\infty,+\infty)\to\Omega$ be a geodesic loop. Since $\Omega$ is visible, by \cite[Lemma 3.1]{BNT}, every geodesic ray lands at a boundary point, hence, there exists $p\in \partial\Omega$ such that $\lim_{t\to\pm \infty}\gamma(t)=p$. 

Let $\gamma^+:[0,+\infty)\to \Omega$ be defined by $\gamma^+(t):=\gamma(t)$ and $\gamma^-:[0,+\infty)\to \Omega$ be defined by $\gamma^-(t):=\gamma(-t)$. 

We are going to show that there exists $C>0$ such that for all $T\geq 0$,
\begin{equation}\label{Eq:bounded-loop-C-convex}
K_\Omega(\gamma^+(T), \gamma^-(T))\leq C.
\end{equation}
Assuming this for the moment, we see that for all $T\geq 0$,
\[
2T=K_\Omega(\gamma^+(T), \gamma^-(T))\leq C,
\]
and thus a contradiction. 

The argument to prove \eqref{Eq:bounded-loop-C-convex} is similar to the one in the proof of \cite[Lemma 6.5]{BGZ}, replacing the quasi-geodesics defined by real segments for the convex case, with the quasi-geodesics defined in Lemma~\ref{Lem:G}. For the sake of clarity, we sketch it here.

Let $U_j$ be a neighborhood of $p$  defined  as before $U_j'$ be as in Lemma~\ref{Lem:G} so that $p\in U_j'$.   

For $t$ sufficiently large, since $\gamma^{+}(t)$ converges to $p$, we can define $q^+(t)\in \partial\Omega\cap U'_j$ in such a way that there exists $s\geq 0$ so that $\gamma^+(t)=q(t)^+-\epsilon_je^{-s}V_j$. Similarly define $q^-(t)$. Also, define $Q_t^{\pm}:[0,+\infty)\to \Omega\cap U_j$  as $Q^{\pm}_t(r)=q^{\pm}(t)-\epsilon_je^{-r}V_j$. By Lemma~\ref{Lem:G},  $Q^{\pm}_t$ are $(A,B)$-quasi-geodesics for $t$ sufficiently large so that $q^{\pm}(t)\in U_j'\cap\Omega$.

Now, fix $T>0$ sufficiently large so that for $t\geq T$, $Q_t^\pm$ are $(A,B)$-quasi-geodesics (here, with some abuse of notation we use $Q_t^\pm$ for denoting also the image of the curve $Q_t^{\pm}$). Since $Q_t^+(0)\in L$ ($L$ the compact subset of $\Omega$ defined as above), the curve $Q_t^+\cup [Q_t^+(0), \gamma(0)]_\Omega$ is a $(A,B')$-quasi-geodesic for some $B'\geq B$. 
Hence, by the Geodesic Stability Theorem,  there exists some $R>0$ such that for every $t>T$ there exists $z_t\in Q_t^+$  such that 
\[
K_\Omega(z_t, \gamma^+(T))\leq R.
\]
Note that $\{z_t\}$ is relatively compact in $\Omega$. Since $q^{\pm}(t)\to p$ as $t\to+\infty$, and $Q^{\pm}_t$ are real segments parallel to $V_j$, it follows that for every $t>T$ there exists $w_t\in Q^-_t$ such that 
\[
K_\Omega(z_t,w_t)\leq R.
\]
Again by the Geodesic Stability Theorem, we can find $s\geq 0$ such that 
\[
K_\Omega(\gamma^-(s), w_t)\leq R.
\]
Then, $K_\Omega(\gamma^+(T), \gamma^-(s))\leq 3R$. Therefore,
\begin{equation*}
\begin{split}
K_\Omega(\gamma^+(T), \gamma^-(T))&\leq K_\Omega(\gamma^+(T), \gamma^-(s))+K_\Omega(\gamma^-(s), \gamma^-(T))\\&=K_\Omega(\gamma^+(T), \gamma^-(s))+|T-s|\\&=K_\Omega(\gamma^+(T), \gamma^-(s))+|K_\Omega(\gamma^+(T), \gamma(0))-K_\Omega(\gamma^-(s), \gamma(0))|\\
&\leq 2 K_\Omega(\gamma^+(T), \gamma^-(s))\leq 3R.
\end{split}
\end{equation*}
Thus, \eqref{Eq:bounded-loop-C-convex} follows, and we are done.
\end{proof}

\end{document}